\newtheorem{theorem}{Theorem}[section]
\newtheorem{definition}[theorem]{Definition}
\numberwithin{equation}{section}
\newtheorem{lemma}[theorem]{Lemma}
\newtheorem{corollary}[theorem]{Corollary}
\newtheorem{remark}[theorem]{Remark}
\newtheorem{claim}[theorem]{Claim}
\numberwithin{equation}{section}
\def\N{\mathbb{N}}
\def\Q{\mathcal{Q}}
\def\R{\mathbb{R}}
\def\EE{\mathcal{E}}
\def\B{\mathcal{B}}
\def\D{\mathcal{D}}
\def\H{\mathcal{H}}
\renewcommand{\phi}{\varphi}
\renewcommand{\epsilon}{\varepsilon}
\newcommand{\1}{{\text{\Large $\mathfrak 1$}}}
\newcommand{\til}{\widetilde}
\newcommand{\pr}[1]{\mathbb{P}\!\left(#1\right)}
\newcommand{\E}[1]{\mathbb{E}\!\left[#1\right]}
\newcommand{\norm}[1]{\left\| #1 \right\|}
\newcommand{\tn}{|\kern-.1em|\kern-0.1em|}
\newcommand{\gra}[2]{\mathrm{Gr}_{#2}(#1)}
\newcommand{\ima}[2]{\mathcal{R}_{#2}(#1)}
\newcommand{\gr}[1]{\mathrm{Gr}(#1)}
\newcommand{\im}[1]{\mathcal{R}(#1)}
\newcommand{\ig}{I_{\gamma,H}}
\newcommand{\cp}{\mathrm{Cap}}
\newcommand{\dpsi}[1]{\dim_{\Psi,H}(#1)}
\newcommand{\dmin}[1]{\dim_{\mathcal{M}}(#1)}
\newcommand{\cpc}[2]{\mathrm{Cap}_{#1}(#2)}
\newcommand\be{\begin{equation}}
\newcommand\ee{\end{equation}}
\begin{document}
\title{\bf Dimension of Fractional Brownian motion \\with variable drift}

\author{
Yuval Peres\thanks{Microsoft Research, Redmond, Washington, USA; peres@microsoft.com} \and Perla Sousi\thanks{University of Cambridge, Cambridge, UK;   p.sousi@statslab.cam.ac.uk}
}
\maketitle
\begin{abstract}
Let $X$ be a fractional Brownian motion in $\R^d$. For any Borel function $f:[0,1] \to \R^d$, we express the Hausdorff dimension of the image and the graph of~$X+f$ in terms of~$f$. This   is new even for the case of Brownian motion and continuous $f$, where it was known that this dimension is almost surely constant. The expression involves an adaptation of the parabolic dimension previously used by Taylor and Watson to characterize polarity for the heat equation. In the case when the graph of~$f$ is a self-affine McMullen-Bedford carpet, we obtain an explicit formula  for the dimension of the graph of~$X+f$ in terms of the generating pattern. In particular, we show that it can be strictly bigger than the maximum of the Hausdorff dimension of the graph of~$f$ and that of~$X$. Despite the random perturbation, the Minkowski and Hausdorff dimension of the graph of~$X+f$ can disagree.
\newline
\newline
\emph{Keywords and phrases.} Brownian motion, Hausdorff dimension, parabolic dimension.
\newline
MSC 2010 \emph{subject classifications.}
Primary   60J65.
\end{abstract}

\section{Introduction}
Let $B$ denote standard $d$-dimensional Brownian motion and suppose that  $f:[0,1] \to \R^d$ is continuous. Our main goal in this paper is to answer the following three questions.
\begin{enumerate}
\item In \cite{PS10} the authors showed that the Hausdorff dimension $\dim \gr{B+f}$  of the Graph of $B+f$ is almost surely constant. How can this constant be determined explicitly from $f \,  $?
\item Let $d=1$. Is there a continuous function $f$ such that the inequality established in \cite{PS10},
$\dim (\gr{B+f}) \ge \max\{\dim (\gr{B}), \dim (\gr{f}) \}$  a.s., is strict? \newline
For Minkowski ($=$ Box) dimension $\dim_M$ in place of Hausdorff dimension, the corresponding inequality is an equality for all continuous $f$, see \cite{CPS12}.
\item Falconer~\cite{falconer88} and Solomyak~\cite{solomyak98} showed that for almost all parameters in the construction of a self-affine set $K$, the Hausdorff dimension $\dim K$ and the Minkowski dimension $\dim_M K$ coincide. Earlier, McMullen~\cite{McMullen} and Bedford~\cite{Bedford_thesis} exhibited a special class of self-affine sets $K$ with  $\dim (K)<\dim_M (K)$. Is this strict inequality robust under some class of perturbations, at least when $K$ is the graph of a function?
\end{enumerate}
We will study fractal properties of graphs and images in a more general setting.
Let $X$ be a fractional Brownian motion in~$\R^d$ and $f$ a Borel measurable function.  We will express the dimension of both the image and the graph of~$X+f$ in terms of the so-called parabolic Hausdorff dimension of the graph of~$f$, which was first introduced by Taylor and Watson in~\cite{TaylorWatson} in order to determine polar sets for the heat equation. We start by introducing some notation and then give the definition of the parabolic Hausdorff dimension.

For a function~$h:[0,1]\to \R^d$ we denote by~$\gra{h}{A}=\{(t,h(t): t\in A\})$ the graph of~$h$ over the set~$A$ and by~$\ima{h}{A} = \{h(t): t\in [0,1]\}$ the image of~$A$ under~$h$. We write simply $\gr{h}=\gra{h}{[0,1]}$.

\begin{definition}\rm{
Let $A\subseteq \R_+  \times \R^d$ and $H\in [0,1]$. For all $\beta>0$ the $H$-parabolic $\beta$-dimensional Hausdorff content is defined by
\[
\Psi^\beta_H(A) = \inf\left\{ \sum_{j}\delta_j^\beta: \ A \subseteq \cup_j [a_j,a_j+\delta_j] \times [b_{j,1}, b_{j,1} + \delta_j^H] \times \ldots \times [b_{j,d},b_{j,d} + \delta_j^H] \right\},
\]
where the infimum is taken over all covers of $A$ by rectangles  of the form given above.
The $H$-\textit{parabolic Hausdorff dimension} is then defined to be
\[
\dpsi{A} = \inf\{ \beta: \Psi^\beta_H(A) =0\}.
\]
}
\end{definition}

This was introduced for $H=1/2$ by Taylor and Watson~\cite{TaylorWatson} in their study of polar sets for the heat equation.

We are now ready to state our main result which gives the dimension of the graph and the image of~$X+f$ in terms of~$\dpsi{\gr{f}}$. 
We write $\dim(A)$ for the Hausdorff dimension of the set~$A$.

\begin{theorem}\label{thm:dim-graph-image}
Let $X$ be a fractional Brownian motion in $\R^d$ of Hurst index~$H$, let~$f:[0,1]\to \R^d$ be a Borel measurable function and~$A$ a Borel subset of~$[0,1]$. If~$\alpha = \dpsi{\gra{f}{A}}$, then almost surely
\begin{align*}
\dim (\gra{X+f}{A}) = \min\{ \alpha/H, \alpha + d(1-H)\} \ \text{ and }
\ \dim (\ima{X+f}{A}) = \min\{\alpha/H,d\}.
\end{align*}
\end{theorem}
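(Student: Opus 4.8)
The plan is to prove matching upper and lower bounds, both driven by the scaling of fractional Brownian motion: over a time interval of length~$\delta$ the increment of~$X$ has size of order~$\delta^H$ (up to a $\sqrt{\log(1/\delta)}$ factor coming from the uniform modulus of continuity), which is exactly the spatial scale attached to a time interval of length~$\delta$ in the definition of~$\Psi^\beta_H$. For the upper bound, fix $\beta>\alpha$ and choose a parabolic cover of $\gra{f}{A}$ by boxes $[a_j,a_j+\delta_j]\times\prod_i[b_{j,i},b_{j,i}+\delta_j^H]$ with $\sum_j\delta_j^\beta$ as small as we like. On the set of times $t$ for which $(t,f(t))$ lies in the $j$-th box, the modulus of continuity of~$X$ confines $(X+f)(t)$ to a spatial cube of side $\ell_j\asymp\delta_j^H\sqrt{\log(1/\delta_j)}$, so $\gra{X+f}{A}$ is covered by boxes of dimensions $\delta_j\times\ell_j^d$ and $\ima{X+f}{A}$ by cubes of side~$\ell_j$. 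Covering each box first by cubes of side~$\ell_j$ and then by cubes of side~$\delta_j$ yields the two exponents $\beta/H$ and $\beta+d(1-H)$ for the graph, while the image gives~$\beta/H$; since the $\sqrt{\log}$ factors are absorbed by letting $\delta_j\to0$, this gives $\dim(\gra{X+f}{A})\le\min\{\beta/H,\beta+d(1-H)\}$ and $\dim(\ima{X+f}{A})\le\min\{\beta/H,d\}$ almost surely, and we let $\beta\downarrow\alpha$.

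For the lower bounds I would use the energy method. Fix $\beta<\alpha$. A Frostman lemma for the anisotropic parabolic content—applied to a compact subset of $\gra{f}{A}$, using that analytic sets are capacitable—produces a probability measure whose time-marginal~$\mu$ is supported on~$A$ and satisfies $\mu\{u:|t-u|\le R,\ \norm{f(t)-f(u)}\le R^H\}\le CR^\beta$ for all~$t$ and all $R\le1$; equivalently $\mu(B(t,R))\le CR^\beta$ for balls of the parabolic quasi-metric $\rho(t,u)=\max\{|t-u|,\norm{f(t)-f(u)}^{1/H}\}$. Pushing $\mu$ forward by $\Phi(t)=(t,(X+f)(t))$ for the graph, or by $t\mapsto(X+f)(t)$ for the image, the heart of the argument is the kernel estimate obtained from the Gaussian law of the increment $X(t)-X(u)\eqdist|t-u|^H Z$ with $Z$ standard Gaussian in~$\R^d$: writing $\tau=|t-u|$ and $w=f(t)-f(u)$,
\[
\E{\left(\tau^2+\norm{\tau^H Z+w}^2\right)^{-s/2}}\le C\,\rho(t,u)^{-\gamma(s)},\qquad \gamma(s)=\max\{Hs,\ s-d(1-H)\},
\]
valid for $\rho(t,u)\le1$, together with the image kernel $\E{\norm{\tau^H Z+w}^{-s}}\le C\rho(t,u)^{-Hs}$ (finite only for $s<d$). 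These follow by splitting into the time-dominated regime $\norm{w}\lesssim\tau^H$, where bounding the Gaussian density by its maximum gives a power of~$\tau$, and the space-dominated regime $\norm{w}\gg\tau^H$, where the Gaussian concentrates away from the origin and one gains a factor $\norm{w}^{-s}$; the two estimates combine into the single power $\rho^{-\gamma(s)}$ because $\gamma$ interpolates continuously through $s=d$.

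With the kernel bound in hand, the expected $s$-energy factorises as
\[
\E{\int\!\!\int\norm{\Phi(t)-\Phi(u)}^{-s}\,d\mu(t)\,d\mu(u)}\le C\int\!\!\int\rho(t,u)^{-\gamma(s)}\,d\mu(t)\,d\mu(u),
\]
and decomposing the inner integral over the dyadic parabolic annuli $\{2^{-k-1}<\rho(t,\cdot)\le2^{-k}\}$ and inserting $\mu(B(t,2^{-k}))\le C2^{-k\beta}$ bounds it by $C\sum_{k\ge0}2^{-k(\beta-\gamma(s))}$, which is finite precisely when $\gamma(s)<\beta$, i.e.\ when $s<\min\{\beta/H,\beta+d(1-H)\}$. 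Hence this energy is almost surely finite, so $\dim(\gra{X+f}{A})\ge s$ for every such~$s$; letting $s$ increase to the threshold and $\beta$ increase to~$\alpha$ gives the graph lower bound, and the same argument with the image kernel (where $s<d$ forces the threshold $\min\{\alpha/H,d\}$) gives the image lower bound.

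The main obstacle I expect is the lower bound, and within it two linked points: establishing the parabolic Frostman measure rigorously (reducing to a compact subset and invoking capacitability for the anisotropic content), and proving the kernel estimate with the correct exponent $\gamma(s)=\max\{Hs,\ s-d(1-H)\}$ uniformly in~$w$. The delicate feature is that the crude bound obtained by replacing the Gaussian density by its maximum is too weak in the space-dominated regime; one must exploit the Gaussian tail to recover the gain $\norm{w}^{-s}$, and only after this refinement do the time- and space-dominated contributions assemble into a single power of the parabolic distance that matches the claimed formula. Everything else—the modulus-of-continuity input for the upper bound and the dyadic energy summation—is routine once these two ingredients are in place.
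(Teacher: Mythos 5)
Your proposal is correct and takes essentially the same route as the paper: the upper bound by transporting a parabolic cover of $\gra{f}{A}$ via the H\"older continuity of $X$ (the paper's Lemmas~\ref{lem:dim-dimpsi-general} and~\ref{lem:equaldpsi}), and the lower bound via a parabolic Frostman measure, Gaussian estimates for the kernel $\E{\left(\tau^2+\norm{\tau^H Z+w}^2\right)^{-s/2}}$, and the energy method (the paper's Theorem~\ref{thm:Frostman} and Lemmas~\ref{lem:capdim}, \ref{lem:capacity}, \ref{lem:calculations}). The only notable difference is organizational: you package the kernel bound as a single power $\rho^{-\gamma(s)}$ of the parabolic quasi-metric with the regime split at $\norm{w}\asymp\tau^H$ (which is indeed attainable by exploiting the Gaussian tail, as you flag), whereas the paper splits at $\norm{w}\asymp\tau^H\sqrt{|\log \tau|}$ and absorbs the resulting polynomial-in-$k$ factors in the dyadic rectangle-counting energy estimate; both versions close the argument.
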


\begin{remark}\label{rem:secondterm}\rm{
Note that when $d=1$ and $A=[0,1]$, then the minimum in the expressions above is always the second term.
}
\end{remark}

We prove Therem~\ref{thm:dim-graph-image} in Section~\ref{sec:imgr}.
We now define a class of self-affine sets analysed by Bedford~\cite{Bedford_thesis} and  McMullen~\cite{McMullen}.

\begin{definition}\rm{
Let $n> m$ be two positive integers and $D\subseteq \{0,\ldots, n-1\}\times \{0,\ldots,m-1\}$. We call~$D$ a pattern. The self-affine set corresponding to the pattern~$D$ is defined to be
\[
K(D) = \left\{\sum_{k=1}^{\infty} (a_kn^{-k}, b_k m^{-k}): (a_k,b_k) \in D \ \text{ for all } \ k\right\}.
\]
We set $r(j) = \sum_{\ell=0}^{m-1} \1((j,\ell) \in D)$ for the number of rectangles or row~$j$.
}
\end{definition}

\begin{corollary}\label{cor:selfaffine}
Let $X$ be a fractional Brownian motion in~$\R$ of Hurst index~$H$.
Let $D\subseteq \{0,\ldots,n-1\}\times\{0,\ldots,m-1\}$ be a pattern such that there exists $f:[0,1]\to[0,1]$ with $\gr{f} = K(D)$ and $\log_n(m) < H$. Then almost surely
\[
\dim (\gr{X+f}) = 1-H + H\log_m\left( \sum_{j=0}^{m-1} r(j)^{\log_n (m)/H} \right).
\]
\end{corollary}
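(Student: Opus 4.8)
The plan is to derive Corollary~\ref{cor:selfaffine} from Theorem~\ref{thm:dim-graph-image} by computing the parabolic Hausdorff dimension of the self-affine carpet. Taking $d=1$ and $A=[0,1]$ in the theorem and invoking Remark~\ref{rem:secondterm}, the minimum is attained at the second term, so $\dim(\gr{X+f}) = \alpha + (1-H)$ with $\alpha = \dpsi{\gr{f}} = \dpsi{K(D)}$. Thus everything reduces to showing
\[
\dpsi{K(D)} = H\log_m\Big(\sum_{j=0}^{m-1} r(j)^{\log_n(m)/H}\Big).
\]
First I would record the structural consequence of the hypothesis that $K(D)$ is a graph: since the projection to the first coordinate must be a bijection onto $[0,1]$, each of the $n$ columns contains exactly one cell, the second digit is a deterministic function $\rho$ of the first, $|D|=n$, and $\sum_j r(j)=n$. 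The whole computation is a parabolic analogue of the Bedford--McMullen formula, and the hypothesis $\log_n(m)<H$ is exactly what guarantees that, at horizontal scale $\delta=n^{-k}$, the matching vertical scale $\delta^H=n^{-kH}=m^{-\ell}$ has $\ell=kH/\log_n(m)>k$; i.e. the parabolic boxes are finer vertically than horizontally, which fixes the combinatorics below (and, as in Remark~\ref{rem:secondterm}, forces the second term).

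For the lower bound I would use a mass-distribution (Frostman) argument adapted to the parabolic metric. Define a Bernoulli measure $\mu$ on $K(D)$ by giving each row $j$ total weight $q_j \propto r(j)^{\log_n(m)/H}$ and spreading it uniformly over the $r(j)$ cells of that row. A parabolic box $Q$ of width $\delta=n^{-k}$ fixes the first $k$ horizontal digits and the first $\ell\approx kH/\log_n(m)$ vertical digits, so $\mu(Q)=\prod_{i=1}^{\ell} q_{b_i}\cdot\prod_{i=1}^{k} r(b_i)^{-1}$. By the strong law of large numbers, for $\mu$-a.e.\ point the local parabolic dimension $\log\mu(Q)/\log\delta$ converges to
\[
\frac{H}{\log m}\Big(-\sum_j q_j\log q_j\Big) + \frac{1}{\log n}\sum_j q_j\log r(j),
\]
and a Lagrange-multiplier computation shows this is maximised precisely at the stated weights $q_j$, where it collapses (the two $\sum_j q_j\log r(j)$ contributions cancel because $H\cdot(\log_n(m)/H)/\log m=1/\log n$) to $H\log_m(\sum_j r(j)^{\log_n(m)/H})$. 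A parabolic version of Billingsley's lemma then converts this almost-everywhere estimate into the lower bound, after the routine step of comparing arbitrary parabolic rectangles to finitely many cylinder-aligned ones.

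The matching upper bound is where the real difficulty lies. The naive cover by parabolic boxes at a single scale $\delta=n^{-k}$ uses $|D|^k R^{\ell-k}$ boxes, where $R$ is the number of nonempty rows, and only yields the Minkowski dimension, which is strictly larger here; this is exactly the Hausdorff/Minkowski discrepancy advertised in the abstract. To reach the sharp value I would adapt the Bedford--McMullen covering argument to the anisotropic parabolic metric: cover $K(D)$ by parabolic boxes whose horizontal scale is chosen adaptively along each branch of the coding tree---coarser along branches that linger in rows with large $r(j)$---so that the accumulated cost telescopes into powers of the generating sum $\sum_j r(j)^{\log_n(m)/H}$, reproducing the exponent found from $\mu$. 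Controlling this adaptive, stopping-time cover, handling boxes at intermediate (non-dyadic) scales, and keeping the anisotropy bookkeeping consistent with $\log_n(m)<H$ is the main obstacle and the step I expect to require the most care.
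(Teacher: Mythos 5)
Your reduction and your lower bound are correct and coincide with the paper's own argument: the corollary is obtained there by combining Remark~\ref{rem:secondterm} with Lemma~\ref{lem:self-affine}, whose proof of the lower bound uses exactly the measure you propose. Indeed, giving row $j$ total mass proportional to $r(j)^{\theta/H}$ (with $\theta=\log_n m$), spread uniformly over the $r(j)$ cells of that row, is precisely the maximizing Bernoulli measure in~\eqref{eq:probmeas}; your cylinder computation for $\mu(Q)$ is~\eqref{eq:musquare} in a different parametrization, your entropy expression and the cancellation at the optimal weights are the same calculation, and the conversion to a dimension bound is the parabolic Billingsley lemma (Lemma~\ref{lem:billingsley}).

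The genuine gap is the upper bound $\dpsi{K(D)} \leq H\log_m\bigl(\sum_{j} r(j)^{\theta/H}\bigr)$, which you yourself identify as ``the main obstacle'' and leave as an unexecuted sketch. An adaptive, stopping-time cover along the coding tree is a plausible route, but nothing in the proposal controls it, and without it you have only proved the lower bound on $\dim(\gr{X+f})$. The paper closes this half without constructing any cover by hand: it reuses the \emph{same} maximizing measure $\mu$ and shows that for \emph{every} point $(x,y)\in K(D)$ (not merely $\mu$-a.e.\ point)
\[
\liminf_{k\to\infty}\frac{\log \mu(Q_k(x,y))}{\log (m^{-k/H})} \leq H\log_m\Bigl(\sum_{j=0}^{m-1} r(j)^{\theta/H}\Bigr),
\]
and then invokes the upper-bound direction of Lemma~\ref{lem:billingsley}. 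The pointwise liminf estimate rests on an elementary telescoping trick: with $S_k=\sum_{\ell\leq k}\log r(y_\ell)$, identity~\eqref{eq:integerpart} expresses the local exponent through the difference $\frac{S_k}{k}-\frac{S_{\lfloor \theta k/H\rfloor}}{\theta k/H}$, and summing these differences along the geometric subsequence $k=(H/\theta)^j$ shows their limsup is nonnegative, since $(S_\ell/\ell)$ is bounded and a sum of differences all below $-\epsilon$ would diverge to $-\infty$. This is the idea missing from your proposal: the measure you already built for the lower bound also certifies the upper bound, because Billingsley's lemma converts a pointwise liminf estimate into a covering statement, so the delicate adaptive cover you were planning never has to be constructed explicitly.
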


\begin{figure}[h!]
\subfigure{
\includegraphics[scale=0.45]{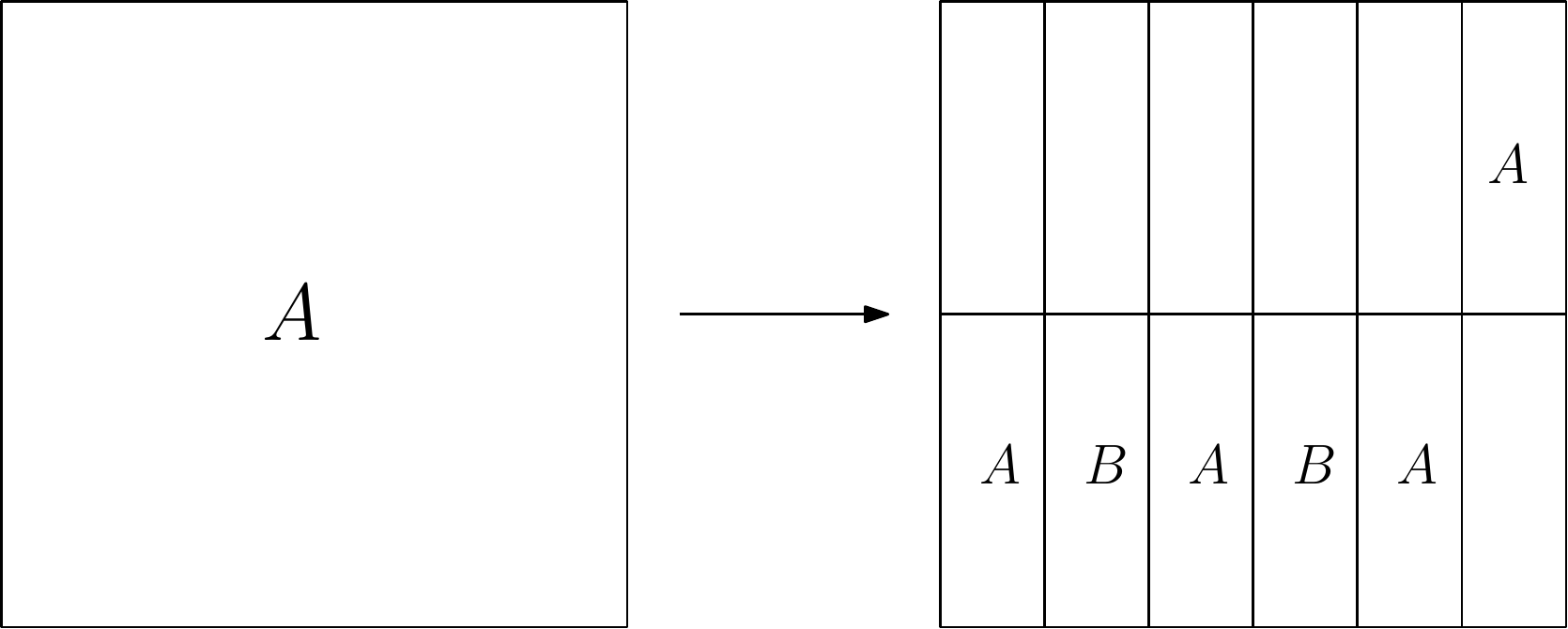}
}
\hspace{0.5cm}
\subfigure{
\includegraphics[scale=0.45]{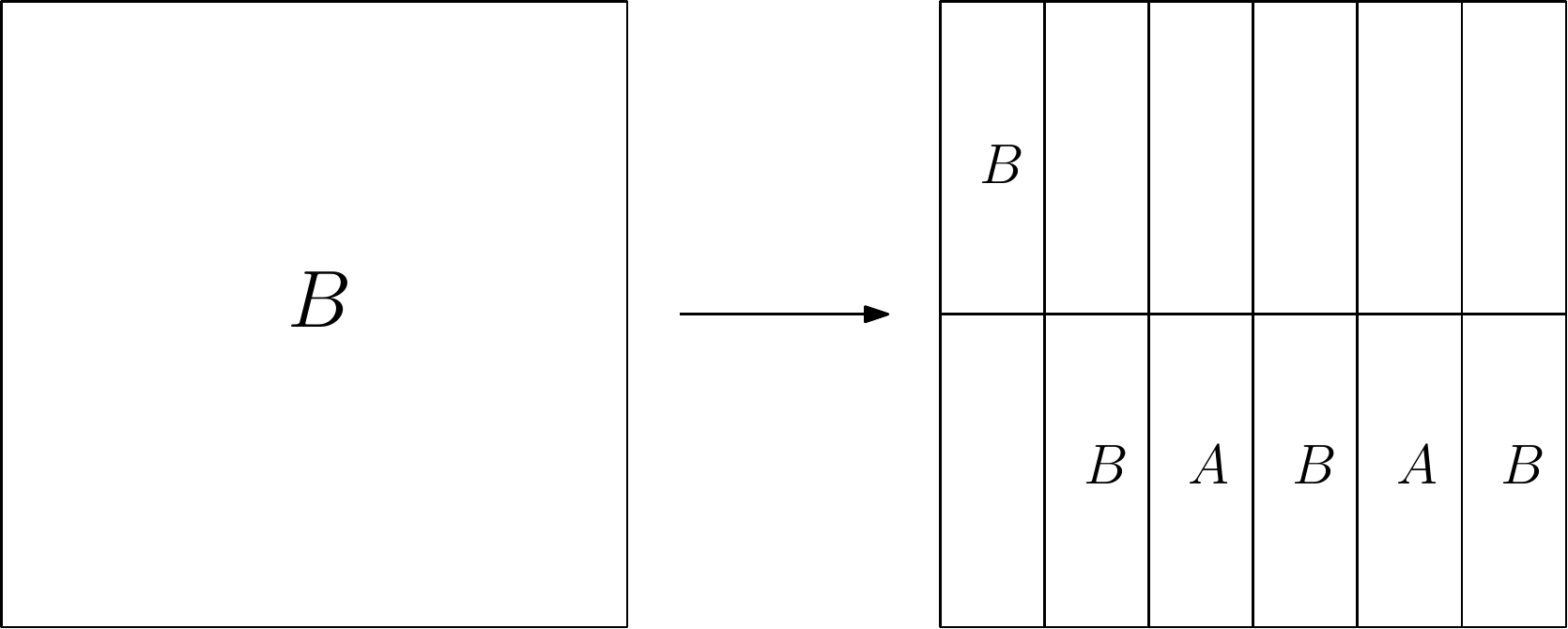}
}
\caption{\label{fig:patterns}The patterns $A$ and $B$ used in each iteration}
\end{figure}

In~\cite[Theorems~1.8 and~1.9]{PS10} it was shown that if~$B$ is a standard Brownian motion and~$f:[0,1]\to\R^d$ for~$d\geq 1$ is a continuous function, then almost surely
\[
\dim(\gr{B+f}) \geq \max\{\dim(\gr{f}),\dim(\gr{B})\} \text{ and } \dim(\im{B+f}) \geq \max\{\dim(\im{f}),\dim(\im{B})\}.
\]
In the same paper it was shown that in dimensions~$3$ and above there exist continuous functions~$f$ such that the Hausdorff dimension of the image and the graph are strictly larger than the maxima given above. In dimension~$1$ though, the question of finding a continuous function~$f$ with $\dim(\gr{B+f}) >\dim(\gr{f})$ remained open.

As an application of Theorem~\ref{thm:dim-graph-image} for the case of the graph we give an example of a function~$f$ which is H\"older continuous with parameter $\log 2/\log 6<1/2$ and for which we can calculate exactly the parabolic Hausdorff dimension.
The patterns used in each iteration of the construction of the graph of~$f$ are depicted in Figure~\ref{fig:patterns} and the first few approximations to the graph of $f$ are shown in Figure~\ref{fig:K}. We defer the formal definition to Section~\ref{sec:self-affine} where we also calculate the parabolic dimension of the graph of~$f$.

\begin{figure}[h!]
\begin{center}
\subfigure{
\includegraphics[scale=0.3]{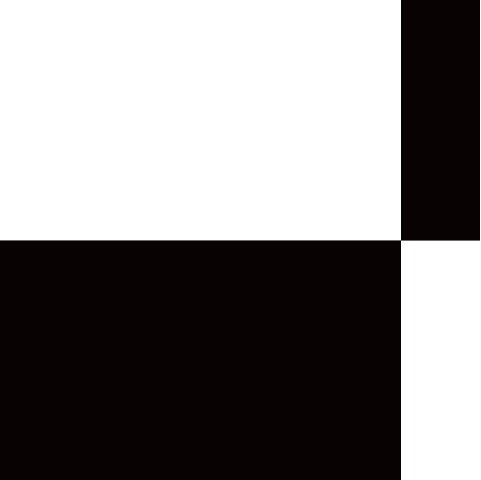}}
\subfigure{
\includegraphics[scale=0.3]{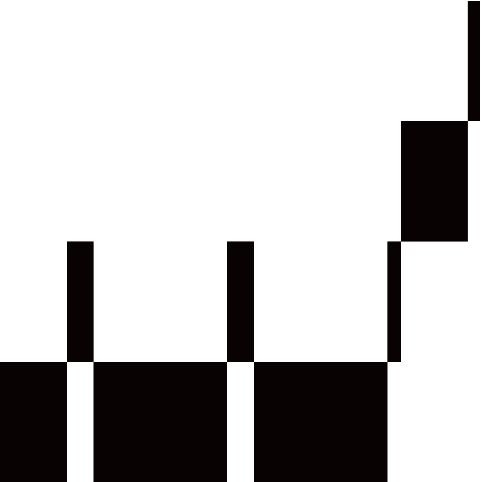}}
\subfigure{
\includegraphics[scale=0.3]{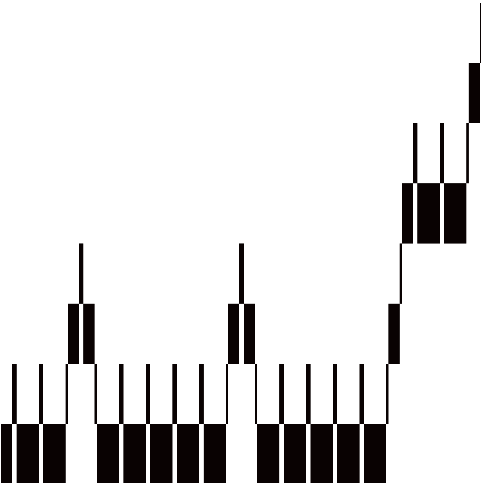}}
\subfigure{
\includegraphics[scale=0.3]{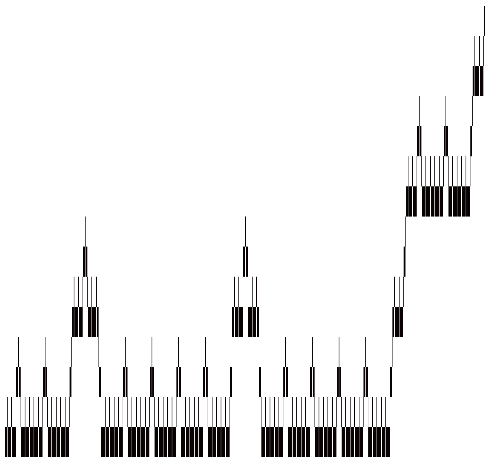}}
\caption{\label{fig:K}Finite approximations of $\gr{f}$}
\end{center}
\end{figure}

\begin{corollary}\label{cor:strictineq}

Let $B$ be a standard Brownian motion in one dimension. Then there exists a function $f:[0,1]\to [0,1]$ (the first approximations to its graph are depicted in Figure~\ref{fig:K}) which is H\"older continuous of parameter $\theta = \log 2/\log 6$, its graph is a self-affine set with $\dmin{\gr{f}}=1+\log 3/
\log 6$ and it satisfies almost surely
\[
\dim (\gr{B+f}) = \frac{1+ \log_2(5^{2\theta} + 1)}{2} > \max\left\{\dim(\gr{f}), \frac{3}{2} \right\} = \log_2(5^\theta + 1).
\]
\end{corollary}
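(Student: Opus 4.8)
The plan is to reduce everything to the exact value of the parabolic Hausdorff dimension $\alpha=\dpsi{\gr f}$ of the graph of the function $f$ generated by alternating the two patterns of Figure~\ref{fig:patterns}, and then to feed this into Theorem~\ref{thm:dim-graph-image} with $d=1$ and $H=1/2$. I will establish that $\alpha=\tfrac12\log_2(5^{2\theta}+1)$, where $\theta=\log 2/\log 6$. Granting this, Theorem~\ref{thm:dim-graph-image} yields $\dim(\gr{B+f})=\min\{2\alpha,\,\alpha+\tfrac12\}$, and since $d=1$ and $A=[0,1]$ Remark~\ref{rem:secondterm} identifies the minimum as the second term $\alpha+\tfrac12=\tfrac12\bigl(1+\log_2(5^{2\theta}+1)\bigr)$, which is precisely the asserted formula.

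The core of the argument is the evaluation of $\alpha$. First I would record the recursive definition of $f$: both patterns live in the $6\times 2$ grid and select exactly one cell in each of the six columns, so that $K(D)$ is genuinely the graph of a single-valued function on $[0,1]$; in each pattern five of these cells lie in one row and one cell in the other, and the two patterns $A$ and $B$ differ only in the placement needed to make the values at the shared endpoints of adjacent level-$k$ intervals agree, which forces $f$ to be continuous. Since the horizontal and vertical contraction ratios are $6^{-1}$ and $2^{-1}$, the oscillation of $f$ over any interval of length $6^{-k}$ is at most a constant times $2^{-k}=(6^{-k})^{\theta}$, giving H\"older continuity of exponent $\theta=\log 2/\log 6$. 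The value of $\alpha$ is then obtained by a McMullen--Bedford-type count adapted to the parabolic metric, in which a covering rectangle at scale $\delta=6^{-k}$ has width $\delta$ and height $\delta^{1/2}=6^{-k/2}$. Because $\theta<H=\tfrac12$ these rectangles are shorter than the level-$k$ cells, so the correct bookkeeping uses approximate parabolic squares obtained by refining the vertical direction to the level $k'$ with $2^{-k'}\asymp 6^{-k/2}$; counting occupied approximate parabolic squares and optimising the exponent $\beta$ shows that every level contributes the same factor $r(0)^{2\theta}+r(1)^{2\theta}=5^{2\theta}+1$, independently of whether $A$ or $B$ is used, whence $\alpha=\tfrac12\log_2(5^{2\theta}+1)$. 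The upper bound on $\Psi^\beta_{1/2}(\gr f)$ comes from this explicit cover, and this computation is the two-pattern analogue of the one behind Corollary~\ref{cor:selfaffine}, to which it reduces in the single-pattern case. I expect the matching lower bound to be the main obstacle: one must control the mass a self-affine Frostman measure assigns to an arbitrary parabolic rectangle across the two anisotropic scales $\delta$ and $\delta^{1/2}$, and it is precisely here that the hypothesis $\theta<H$ (equivalently $\log_6 2<\tfrac12$, which holds) guarantees that the approximate-parabolic-square regime governs and that the Frostman exponent matches the covering exponent.

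Finally I would assemble the inequalities. The graph of $f$ has Hausdorff dimension equal to the McMullen--Bedford value $\dim(\gr f)=\log_2(5^{\theta}+1)$ and, by a box count at scale $6^{-k}$ in which each of the $6^{k}$ level-$k$ cells requires $3^{k}$ squares of side $6^{-k}$, Minkowski dimension $\dmin{\gr f}=\log_6 18=1+\log 3/\log 6$, so that $\dim(\gr f)<\dmin{\gr f}$ as claimed. The two strict inequalities are then elementary: $\tfrac12\bigl(1+\log_2(5^{2\theta}+1)\bigr)>\log_2(5^{\theta}+1)$ is equivalent after exponentiating to $2(5^{2\theta}+1)>(5^{\theta}+1)^2$, i.e.\ to $(5^{\theta}-1)^2>0$, which holds since $5^{\theta}\neq 1$; and $\tfrac12\bigl(1+\log_2(5^{2\theta}+1)\bigr)>\tfrac32$ is equivalent to $5^{2\theta}>3$, i.e.\ to $\theta>\log 3/(2\log 5)$, which holds numerically. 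Since also $\log_2(5^{\theta}+1)>\tfrac32$, the maximum on the right-hand side equals $\log_2(5^{\theta}+1)$, completing the argument.
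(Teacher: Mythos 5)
Your top-level reduction is exactly the paper's: compute $\alpha:=\dpsi{\gr{f}}$, feed it into Theorem~\ref{thm:dim-graph-image} with $d=1$, $H=\tfrac12$, use Remark~\ref{rem:secondterm} to identify the minimum as $\alpha+\tfrac12$, and finish with algebra. Your construction and H\"older sketch for $f$ is an acceptable compression of Claim~\ref{cl:f}, and your verification of the final inequalities (reduction to $(5^\theta-1)^2>0$ and to $5^{2\theta}>3$, plus checking that the maximum is $\log_2(5^\theta+1)$) is correct and in fact more explicit than the paper, which merely asserts them. The problem lies in the core step, the evaluation of $\alpha$.

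Your proposed upper bound --- that $\Psi^\beta_{1/2}(\gr{f})=0$ for $\beta>\alpha$ follows from ``this explicit cover'' by occupied approximate parabolic squares --- fails, and the failure is quantitative. At scale $\delta=6^{-k}$ an approximate parabolic square has height $6^{-k/2}=2^{-k/(2\theta)}$; since in \emph{both} patterns $A$ and $B$ each of the two rows contains selected cells, every digit string $(y_{k+1},\dots,y_{\lfloor k/(2\theta)\rfloor})$ is realized above every base-$6$ interval of generation $k$. Hence the number of occupied approximate parabolic squares is of order $6^k\cdot 2^{k/(2\theta)-k}=6^{(3/2-\theta)k}$, and the uniform cover only yields $\dpsi{\gr{f}}\leq \tfrac32-\theta\approx 1.113$, which is strictly \emph{larger} than $\alpha=\tfrac12\log_2(5^{2\theta}+1)\approx 1.081$. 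This overshoot is precisely the McMullen phenomenon (box-type counts exceed Hausdorff-type dimension whenever the row counts $r(j)$ are unequal, here $5$ and $1$), and it persists for the parabolic dimension --- indeed it is the very reason the paper can conclude that Minkowski and Hausdorff dimensions of $\gr{B+f}$ disagree. No ``optimisation of $\beta$'' over uniform covers can produce the factor $5^{2\theta}+1$; one needs covers whose scale varies from point to point. You also have the difficulty inverted: the lower bound, which you flag as the main obstacle, is the routine half --- take the Bernoulli-type measure with weights $p(d)\propto r(d)^{2\theta-1}$ (the paper's only extra wrinkle in the two-pattern setting is choosing the two distributions to depend only on the row coordinate and agree there, so the weight sequence is i.i.d.), apply the strong law of large numbers, and use the easy direction of Billingsley's lemma (Lemma~\ref{lem:billingsley}). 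The genuinely delicate half is the upper bound, which the paper gets from the other direction of Billingsley's lemma: one must show for \emph{every} point of $K$, not just $\mu$-a.e.\ point, that $\liminf_k \log\bigl(\mu(Q_k(x,y))\bigr)/\log\bigl(2^{-2k}\bigr)\leq \tfrac12\log_2(5^{2\theta}+1)$, and this is done by the telescoping argument along the geometric subsequence $k,\,k/(2\theta),\,k/(2\theta)^2,\dots$, which forces $\limsup_k\bigl(S_k/k - S_{\lfloor 2\theta k\rfloor}/(2\theta k)\bigr)\geq 0$. Without this step (or McMullen's equivalent non-uniform covering via digit frequencies), your argument proves only $\alpha\leq \tfrac32-\theta$, so the claimed equality for $\dim(\gr{B+f})$ remains unproven.
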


\begin{figure}[h!]
\begin{center}
\includegraphics[scale=0.6]{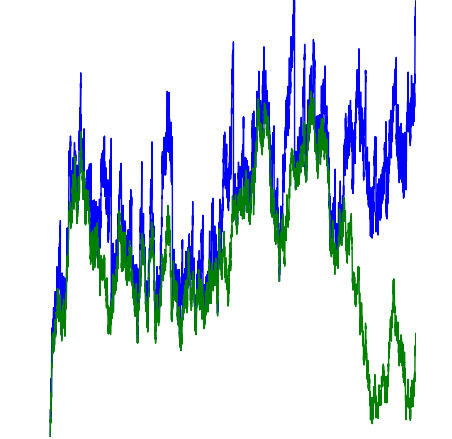}
\caption{\label{fig:B+f} The graph of $B+f$ in green and the graph of $B$ in blue with $f$ of Corollary~\ref{cor:strictineq}}
\end{center}
\end{figure}

We prove Corollaries~\ref{cor:selfaffine} and~\ref{cor:strictineq} in Section~\ref{sec:self-affine}.

\begin{remark}
\rm{
If $K(D)$ is a self-affine set corresponding to the pattern $D$ and $r(j)\geq 1$ for all~$j$, then McMullen~\cite{McMullen} showed
\begin{align}\label{eq:self-ineq}
\dim_M({K(D)}) = 1 + \log_n\frac{|D|}{m} \quad \text{and} \quad \dim(K(D))  = \log_m\left( \sum_{j=1}^{m} r(j)^{\log_n m} \right).
\end{align}
From~\cite[Theorem~1.8]{CPS12} we have that if $h:[0,1]\to \R$ is a continuous function, then almost surely
\begin{align}\label{eq:mineq}
\dim_M(\gr{B+h}) = \max\{ \dim_M(\gr{h}), \dim_M(\gr{B})\}.
\end{align}
The proof of \eqref{eq:self-ineq} applies to the graph of~$f$, where~$f$ is the function of Corollary~\ref{cor:strictineq}. In conjunction with \eqref{eq:mineq}, this gives that almost surely
\[
\dim_M({\gr{B+f}}) = \dim_M({\gr{f}}) = 1 + \log_6 3 = 1.6131... >\dim(\gr{B+f}) =  1.5807...
\]
This shows that despite the Brownian perturbations, the Hausdorff and Minkowski dimensions still disagree (as is the case for the graph of~$f$). Comparisons of Hausdorff and Minkowski dimensions for other self-affine graphs perturbed by Brownian motion are in Section~\ref{sec:mink}.
}
\end{remark}

{\bf Related work}
Khoshnevisan and Xiao~\cite{KhosXiao} also employ the parabolic dimension of Taylor and Watson~\cite{TaylorWatson} to determine the Hausdorff dimension of the image of Brownian motion intersected with a compact set. The problem of estimating the dimension of fractional Brownian motion with drift was studied by Bayart and Heurteaux~\cite{BayHeu} (the case of Brownian motion was considered in~\cite{PS10}). These papers obtain upper and lower bounds for the dimension which differ in general. The lower bounds are proved by the energy method. The novel aspect of Theorem~\ref{thm:dim-graph-image} is that it gives an exact expression for the dimension of the graph of $X+f$ valid for all Borel functions~$f$.

\section{Dimension of~$\gr{X+f}$ and~$\im{X+f}$}\label{sec:imgr}

In this section we prove Theorem~\ref{thm:dim-graph-image}. We start with an easy preliminary lemma that relates the parabolic Hausdorff dimension to Hausdorff dimension. 

Note that for functions $f,g$ we write $f(n)\lesssim g(n)$ if there exists a constant $c>0$ such that $f(n)\leq cg(n)$ for all $n$. We write $f(n)\gtrsim g(n)$ if $g(n)\lesssim f(n)$.

\begin{lemma}\label{lem:dim-dimpsi-general}
For all~$A\subseteq \R_+\times \R^d$ we have
\[
\dim(A) \leq \left(\dpsi{A} + d(1-H) \right) \wedge  \frac{\dpsi{A}}{H}.
\]
\end{lemma}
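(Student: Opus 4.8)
The plan is to compute the ordinary Hausdorff dimension through the Hausdorff content $\mathcal{H}^s_\infty(A)=\inf\{\sum_j (\mathrm{diam}\,U_j)^s: A\subseteq\bigcup_j U_j\}$, using the standard fact that $\dim(A)=\inf\{s:\mathcal{H}^s_\infty(A)=0\}$. Fix any $\beta>\dpsi{A}$, so that $\Psi^\beta_H(A)=0$, and for $\epsilon\in(0,1)$ choose a parabolic cover $A\subseteq\bigcup_j R_j$ with $R_j=[a_j,a_j+\delta_j]\times\prod_{i=1}^d[b_{j,i},b_{j,i}+\delta_j^H]$ satisfying $\sum_j\delta_j^\beta<\epsilon$. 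Since $\beta>0$ and $\epsilon<1$, this forces every $\delta_j<1$, and hence $\delta_j\le\delta_j^H$; this elementary geometric fact drives both estimates, and will also make the diameters of the covering sets tend to $0$ as $\epsilon\downarrow0$.

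For the first bound I would chop each $R_j$ into cubes of side $\delta_j$. The time edge has length $\delta_j$, so a single cube suffices in that direction, while each of the $d$ spatial edges of length $\delta_j^H$ requires at most $\lceil\delta_j^{H-1}\rceil\le 2\delta_j^{H-1}$ cubes. Thus $R_j$ is covered by $\lesssim\delta_j^{-d(1-H)}$ cubes, each of diameter $\lesssim\delta_j$, so the total contribution to the $s$-content is $\lesssim\delta_j^{-d(1-H)}\cdot\delta_j^s=\delta_j^{s-d(1-H)}$. Choosing $s=\beta+d(1-H)$ makes the exponent equal to $\beta$, giving $\mathcal{H}^s_\infty(A)\lesssim\sum_j\delta_j^\beta<\epsilon$; letting $\epsilon\downarrow0$ yields $\dim(A)\le\beta+d(1-H)$.

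For the second bound I would instead cover each $R_j$ by a single cube of side $\delta_j^H$: because $\delta_j\le\delta_j^H$, one such cube already contains the short time edge, while each spatial edge has length exactly $\delta_j^H$, so one cube suffices. This cube has diameter $\lesssim\delta_j^H$ and contributes $\lesssim\delta_j^{Hs'}$ to the $s'$-content. Choosing $s'=\beta/H$ gives $\mathcal{H}^{s'}_\infty(A)\lesssim\sum_j\delta_j^\beta<\epsilon$, whence $\dim(A)\le\beta/H$. Combining the two estimates and letting $\beta\downarrow\dpsi{A}$ produces the claimed minimum.

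There is no serious obstacle here: the only thing to watch is the bookkeeping in passing from the anisotropic parabolic rectangles to isotropic cubes, namely counting the number of side-$\delta_j$ cubes needed and tracking the resulting powers of $\delta_j$, together with the observation that $\sum_j\delta_j^\beta<1$ automatically forces $\delta_j<1$. This last point both validates the inequality $\delta_j\le\delta_j^H$ used throughout and, since then each $\delta_j<\epsilon^{1/\beta}$, guarantees that the cube diameters shrink to $0$, so the content bounds may, if one prefers, be stated directly in terms of Hausdorff measure.
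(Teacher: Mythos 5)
Your proof is correct and takes essentially the same approach as the paper's: from a single parabolic cover you obtain the bound $\dpsi{A}/H$ by covering each rectangle with one cube of side $\delta_j^H$ (using $\delta_j\le\delta_j^H$), and the bound $\dpsi{A}+d(1-H)$ by subdividing each rectangle into roughly $\delta_j^{-d(1-H)}$ cubes of side $\delta_j$, exactly as in the paper. The only differences are cosmetic, namely the parametrization of the exponents and the explicit appeal to Hausdorff content.
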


\begin{proof}[{\textbf{Proof}}]

For $\beta>0$ we let $\H_\beta(A)$ be the~$\beta$-Hausdorff content of $A$, i.e.
\[
\H_\beta(A) = \inf\left\{\sum_j |E_j|^\beta: A\subseteq \cup_j E_j\right\}.
\]
Let $\epsilon>0$ and $\eta<1$.
We set $\beta = \frac{\dpsi{A}}{H}+\frac{\epsilon}{H}$ and $\gamma -d(1-H)= \dpsi{A} +\epsilon$. Then $\Psi_{\beta H}(A) =0$, and hence there exists a cover $([a_j,a_j+\delta_j] \times [b_{j,1},b_{j,1}+\delta_j^H]\times\ldots \times [b_{j,d},b_{j,d}+\delta_j^H])_j$ of the set~$A$, such that
\begin{align}\label{eq:smalldelta}
\sum_{j} \delta_j^{\beta H} < \eta.
\end{align}
From~\eqref{eq:smalldelta} it follows that $\delta_j<1$ for all~$j$, and hence the diameter of every set in the above cover of~$A$ is at most $\sqrt{d} \delta_j^H$. Therefore we obtain
\begin{align}\label{eq:firstdim}
\H_\beta(A)\leq \sum_{j} d^{\beta/2}({\delta_j}^H)^\beta = d^{\beta/2}\sum_j \delta_{j}^{\beta H} <d^{\beta/2}\eta,
\end{align}
where in the last step we used~\eqref{eq:smalldelta}.
Each interval~$[b_{j,i},b_{j,i}+\delta_j^H]$ can be divided into $\delta_j^{H-1}$ intervals of length $\delta_j$ each.
(We omit integer parts to lighten the notation.) In this way we obtain a new cover of the set~$A$ which satisfies
\begin{align}\label{eq:seconddim}
\H_\gamma(A) \leq \sum_j \delta_j^{(H-1)d} \delta_j^\gamma = \sum_j \delta_j^{\beta H}<\eta.
\end{align}
From~\eqref{eq:firstdim} and~\eqref{eq:seconddim} we deduce that
\[
\dim(A) \leq \beta \wedge \gamma = \left(\frac{\dpsi{A}}{H} + \frac{\epsilon}{H}\right) \wedge (\dpsi{A} + d(1-H) + \epsilon).
\]
Therefore letting $\epsilon$ go to $0$ we conclude
\[
\dim(A) \leq \left(\dpsi{A} + d(1-H) \right)\wedge \frac{\dpsi{A} }{H}
\]
and this finishes the proof.
\end{proof}

\begin{lemma}\label{lem:equaldpsi}
Let $f:[0,1]\to \R^d$ be a Borel measurable function. Then for all Borel sets $A\subseteq [0,1]$ almost surely
\[
\dpsi{\gra{X+f}{A}} = \dpsi{\gra{f}{A}}.
\]
\end{lemma}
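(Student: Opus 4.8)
The plan is to deduce the equality from two opposite inequalities, both resting on the uniform modulus of continuity of fractional Brownian motion, arranged so that they hold on one common almost sure event. Recall that $X$ is almost surely locally $\theta$-H\"older for every $\theta<H$: on an almost sure event there is, for each rational $\theta<H$, a random constant $C=C(\theta)$ with $|X(t)-X(s)|\le C|t-s|^\theta$ for all $s,t\in[0,1]$. I will argue on this event throughout, and first establish $\dpsi{\gra{X+f}{A}}\le\dpsi{\gra{f}{A}}$.

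For this upper bound, set $\alpha=\dpsi{\gra{f}{A}}$ and fix $\beta>\alpha$. Choose $\theta<H$ close enough to $H$ that $d(H-\theta)<\beta-\alpha$, and then $\beta'$ with $\alpha<\beta'<\beta-d(H-\theta)$. Since $\Psi^{\beta'}_H(\gra{f}{A})=0$, for every $\eta>0$ there is a cover of $\gra{f}{A}$ by parabolic rectangles $[a_j,a_j+\delta_j]\times\prod_{i=1}^d[b_{j,i},b_{j,i}+\delta_j^H]$ with $\sum_j\delta_j^{\beta'}<\eta$; in particular each $\delta_j^{\beta'}<\eta$, so all $\delta_j$ are small. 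Over the time interval $[a_j,a_j+\delta_j]$ the function $f$ varies within a spatial box of side $\delta_j^H$, while by the H\"older bound $X$ varies within a box of side $C\delta_j^\theta$; hence the values $(X+f)(t)$ for $t\in[a_j,a_j+\delta_j]\cap A$ all lie in one spatial box of side at most $(1+C)\delta_j^\theta$. Keeping the same time interval, I subdivide this box into at most $(C')^d\delta_j^{-d(H-\theta)}$ parabolic rectangles of spatial side $\delta_j^H$, obtaining a cover of $\gra{X+f}{A}$ whose cost is
\[
\sum_j (C')^d\,\delta_j^{-d(H-\theta)}\,\delta_j^{\beta}=(C')^d\sum_j\delta_j^{\,\beta-d(H-\theta)-\beta'}\,\delta_j^{\beta'}\le(C')^d\sum_j\delta_j^{\beta'}<(C')^d\eta,
\]
where the inequality uses that the exponent $\beta-d(H-\theta)-\beta'$ is positive and $\delta_j<1$. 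Letting $\eta\to0$ gives $\Psi^\beta_H(\gra{X+f}{A})=0$ for every $\beta>\alpha$, hence $\dpsi{\gra{X+f}{A}}\le\alpha$.

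The reverse inequality follows by symmetry, which is the clean point here. Writing $f=(X+f)+(-X)$ and noting that $-X$ is again a fractional Brownian motion of Hurst index $H$, with identical increment moduli $|(-X)(t)-(-X)(s)|=|X(t)-X(s)|$, the argument above applied with $X$ replaced by $-X$ and $f$ replaced by $X+f$ yields $\dpsi{\gra{f}{A}}\le\dpsi{\gra{X+f}{A}}$ on the very same almost sure event. Combining the two inequalities proves the lemma, simultaneously for all Borel $A\subseteq[0,1]$ on that event.

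I do not expect a genuine obstacle, but two points need care. First, one must use that a small total cost $\sum_j\delta_j^{\beta'}<\eta$ forces each individual $\delta_j$ to be small, so that the H\"older estimate is applicable on every interval of the cover; this is exactly why I split the budget, proving triviality of the content at $\beta$ by comparison against a cover economical at the strictly smaller exponent $\beta'$. Second, the loss $d(H-\theta)$ incurred when refining a box of side $\delta_j^\theta$ into boxes of side $\delta_j^H$ must be kept below $\beta-\alpha$; this is possible precisely because $\theta$ may be taken arbitrarily close to $H$, so it is the full H\"older regularity of $X$ up to exponent $H$ that is being used. (Equivalently one could work with the sharp modulus $C|t-s|^H\sqrt{\log(1/|t-s|)}$ and absorb the logarithmic factor into an arbitrarily small power of $\delta_j$.)
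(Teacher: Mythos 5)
Your proof is correct and follows essentially the same route as the paper: both arguments use the almost sure H\"older continuity of $X$ with exponent arbitrarily close to $H$ to transfer a near-optimal parabolic cover of $\gra{f}{A}$ into a cover of $\gra{X+f}{A}$ with an arbitrarily small loss in the exponent, and both get the reverse inequality by the same symmetry, writing $f=(X+f)+(-X)$ on the common H\"older event. The only (immaterial) difference is the bookkeeping: the paper inflates each time interval to length of order $\delta_j^{1-\zeta/H}$ so that the fattened box is itself a parabolic rectangle, whereas you keep the time side $\delta_j$ and subdivide the fattened spatial box into roughly $\delta_j^{-d(H-\theta)}$ boxes of side $\delta_j^{H}$, the loss vanishing in either case as the H\"older exponent tends to $H$.
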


\begin{proof}[\textbf{Proof}]

Since $X$ is a fractional Brownian motion of Hurst index~$H$, it follows that it is almost surely H\"older continuous of parameter $H-\epsilon$ for all $\epsilon>0$ (see for instance~\cite[Section~18]{Kahane}).
Therefore, for~$\zeta>0$ there exists a constant $C$ such that almost surely for all $s,t\in [0,1]$ we have
\begin{align}\label{eq:holder}
\norm{X_s- X_t} \leq C |t-s|^{H-\zeta}.
\end{align}
Let $0<\eta<h_0$. We set $\alpha = \dpsi{\gra{f}{A}}$. Then $\Psi_{\alpha+\epsilon}(\gra{f}{A})=0$, and hence there exists a cover~$([a_j,a_j+\delta_j]\times [b_j^1,b_{j,1}+\delta_j^H]\times \ldots \times [b_{j,d},b_{j,d}+\delta_j^H])_j$ of~$\gra{f}{A}$ such that
\begin{align}\label{eq:sumdelta}
\sum_j \delta_j^{\alpha+\epsilon} <\eta.
\end{align}
Using this cover we will derive a cover of~$\gra{X+f}{A}$. By~\eqref{eq:holder} if $t\in [a_j,a_j+\delta_j]$, then
\begin{align*}
\|X_t - X_{a_j}\| \leq C \delta_j^{H-\zeta}.
\end{align*}
Therefore the collection of sets
\[
\left(\left[a_j,a_j+9C^2\delta_j^{1-\zeta/H} \right] \times \left[r_{j,1}, r_{j,1} +3C\delta_j^{H-\zeta}\right]\times \ldots \times \left[r_{j,d}, r_{j,d} + 3C\delta_j^{H-\zeta}\right]\right)_j,
\]
where $r_{j,i}=b_{j,i}+X^i_{a_j} -C\delta_j^{H-\zeta}$ is a cover of
$\gra{X+f}{A}$. From~\eqref{eq:sumdelta} we obtain that for a positive constant $c$ we have
\[
\Psi_{\alpha+2\epsilon}(\gra{X+f}{A}) \leq (9C^2)^{\alpha+2\epsilon}\sum_{j} \left(\delta_j^{1-\zeta/H} \right)^{\alpha+2\epsilon} \leq c\sum_j \delta_j^{\alpha+\epsilon} <c \eta,
\]
where the penultimate inequality follows by choosing~$\zeta>0$ sufficiently small and $c$ is a positive constant.
We thus showed that almost surely $\Psi_{\alpha+2\epsilon}(\gra{X+f}{A}) =0$ for all $\epsilon>0$, which implies that almost surely
\[
\dpsi{\gra{X+f}{A}} \leq \alpha.
\]
The other inequality follows in the same way and this concludes the proof.
\end{proof}

Lemmas~\ref{lem:dim-dimpsi-general} and~\ref{lem:equaldpsi} give the following:

\begin{corollary}\label{cor:parabdim}
Let $f:[0,1]\to \R^d$ be a function. Then
almost surely we have
\[
\dim(\gra{X+f}{A}) \leq \frac{\dpsi{\gra{f}{A}}}{H} \wedge \left(\dpsi{\gra{f}{A}} + d(1-H)\right).
\]
\end{corollary}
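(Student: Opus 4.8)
The plan is to combine the two preceding lemmas directly, with essentially no new computation. The key observation is that for each realization of $X$, the set $\gra{X+f}{A}$ is a Borel subset of $\R_+\times\R^d$, so Lemma~\ref{lem:dim-dimpsi-general}, being a purely deterministic statement valid for \emph{every} such set, applies pathwise to it.

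First I would apply Lemma~\ref{lem:dim-dimpsi-general} with the generic set taken to be $\gra{X+f}{A}$. This yields, for every realization of $X$,
\[
\dim(\gra{X+f}{A}) \leq \left(\dpsi{\gra{X+f}{A}} + d(1-H)\right) \wedge \frac{\dpsi{\gra{X+f}{A}}}{H}.
\]

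Next I would invoke Lemma~\ref{lem:equaldpsi}, which guarantees an event of full probability on which $\dpsi{\gra{X+f}{A}} = \dpsi{\gra{f}{A}}$. On this event I simply substitute this identity into the displayed bound. Since both arguments of the minimum are increasing functions of the parabolic Hausdorff dimension, the substitution is immediate and produces exactly
\[
\dim(\gra{X+f}{A}) \leq \frac{\dpsi{\gra{f}{A}}}{H} \wedge \left(\dpsi{\gra{f}{A}} + d(1-H)\right),
\]
which is the claimed inequality.

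There is no genuine obstacle here: all of the analytic content has already been discharged in the two lemmas, namely the covering argument relating parabolic content to ordinary Hausdorff content, and the H\"older-continuity argument showing the drift does not change the parabolic dimension. The only point worth a moment of care is how the two ingredients combine: Lemma~\ref{lem:dim-dimpsi-general} holds surely for each fixed set, whereas Lemma~\ref{lem:equaldpsi} holds only almost surely, so the resulting bound on $\dim(\gra{X+f}{A})$ is an almost sure statement, precisely as asserted.
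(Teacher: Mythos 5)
Your proposal is correct and is exactly the paper's argument: the paper derives this corollary by the same one-line combination of Lemma~\ref{lem:dim-dimpsi-general} (applied pathwise to $\gra{X+f}{A}$, which needs no measurability since that lemma holds for arbitrary subsets of $\R_+\times\R^d$) with the almost-sure identity of parabolic dimensions from Lemma~\ref{lem:equaldpsi}. Your remark about which ingredient is deterministic and which is almost sure is exactly the right bookkeeping.
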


We now recall the definition of the capacity of a set.

\begin{definition}\rm{
Let $K:\R^d \to [0,\infty]$ and $A$ a Borel set in~$\R^d$. (Sometimes $K$ is called a \it{difference kernel}.) The $K$-energy of a measure~$\mu$ is defined to be
\[
\EE_K(\mu) = \int \int K(x-y) \,d\mu(x)d\mu(y)
\]
and the $K$-capacity of $A$ is defined as
\[
\cpc{K}{A} = [\inf\{\EE_K(\mu): \mu \text{ a probability measure on } A\}]^{-1}.
\]
When the kernel has the form $K(x) = |x|^{-\alpha}$, then we write
$\mathcal{E}_\alpha(\mu)$ for $\mathcal{E}_K(\mu)$ and
$\mathrm{Cap}_\alpha(A)$
for $\mathrm{Cap}_K(A)$ and we refer to them as the $\alpha$-energy of $\mu$ and the Riesz $\alpha$-capacity
of $A$ respectively.
}
\end{definition}

We recall the following theorem which gives the connection between the Hausdorff dimension of a set and its Riesz $\alpha$-capacity. For the proof see~\cite{Carlesson}.

\begin{theorem}[Frostman]\label{thm:dimcap}
For any Souslin set $A \subset \R^d$,
\[
\dim(A) = \sup \{\alpha: \mathrm{Cap}_\alpha(A) >0 \}.
\]
\end{theorem}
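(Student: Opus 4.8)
The plan is to establish the two inequalities $\dim(A) \geq \sup\{\alpha : \mathrm{Cap}_\alpha(A) > 0\}$ and $\dim(A) \leq \sup\{\alpha : \mathrm{Cap}_\alpha(A) > 0\}$ separately. Both rest on a single comparison: the finiteness of the $\alpha$-energy $\mathcal{E}_\alpha(\mu)$ is equivalent, up to constants, to a power-law bound $\mu(B(x,r)) \lesssim r^\alpha$ on the measure of small balls, and such a bound is exactly what governs $\alpha$-dimensional Hausdorff measure.

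For the direction $\dim(A) \geq \sup\{\alpha : \mathrm{Cap}_\alpha(A) > 0\}$ I would fix any $\alpha$ with $\mathrm{Cap}_\alpha(A) > 0$ and choose a probability measure $\mu$ on $A$ with $\mathcal{E}_\alpha(\mu) < \infty$. Writing the potential $U(x) = \int |x-y|^{-\alpha}\,d\mu(y)$, one has $\mathcal{E}_\alpha(\mu) = \int U\,d\mu$, so the set $\{U \leq M\}$ carries positive $\mu$-mass once $M$ is large. The elementary estimate $\mu(B(x,r)) \leq r^\alpha U(x)$, which holds because $|x-y|^{-\alpha} \geq r^{-\alpha}$ on $B(x,r)$, then gives $\mu(B(x,r)) \leq M r^\alpha$ on $\{U \leq M\}$. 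The mass distribution principle forces $\mathcal{H}^\alpha(A) > 0$, hence $\dim(A) \geq \alpha$, and taking the supremum over admissible $\alpha$ closes this direction.

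For the reverse inequality I would argue contrapositively: fix any $s < \dim(A)$, so $\mathcal{H}^s(A) > 0$, and produce a measure of finite $\alpha$-energy for every $\alpha < s$, showing $\mathrm{Cap}_\alpha(A) > 0$ and hence that the capacitary supremum is at least $s$; letting $s \uparrow \dim(A)$ then concludes. The engine is Frostman's lemma, which supplies a probability measure $\mu$ on $A$ with $\mu(B(x,r)) \leq C r^s$ for all $x,r$. Given such $\mu$ I would bound the potential by decomposing space into dyadic annuli $\{2^{-k-1} \leq |x-y| < 2^{-k}\}$ around $x$: each contributes at most $2^{(k+1)\alpha}\mu(B(x,2^{-k})) \lesssim 2^{-k(s-\alpha)}$, and the geometric series converges precisely because $\alpha < s$. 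Thus $U$ is uniformly bounded and $\mathcal{E}_\alpha(\mu) < \infty$, giving $\mathrm{Cap}_\alpha(A) > 0$.

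The main obstacle is Frostman's lemma itself on a general Souslin set. For a compact set the Frostman measure is constructed by spreading unit mass over the pieces of a dyadic net meeting the set and renormalising so that no cube of side $2^{-k}$ receives more than $2^{-ks}$, then extracting a weak-$*$ limit; the delicate point for a Souslin rather than compact $A$ is capacitability, namely that a Souslin set of positive $s$-Hausdorff measure contains a compact subset of positive $s$-measure, to which the compact construction applies. This measure-theoretic reduction, rather than the energy estimates above (which are routine dyadic bookkeeping), is where the real work lies, and it is exactly why the hypothesis is restricted to Souslin sets.
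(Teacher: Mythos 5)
Your argument is correct and is the classical proof: the paper itself offers no proof of this theorem (it is quoted as known, with the proof deferred to the cited reference of Carleson), and your two-step scheme --- finite $\alpha$-energy plus the mass distribution principle giving $\dim(A)\ge\alpha$, and Frostman's lemma plus dyadic-annuli summation of the potential giving $\mathrm{Cap}_\alpha(A)>0$ for every $\alpha<s<\dim(A)$ --- is exactly the standard route taken in such references. Two points you gloss over, neither a genuine gap: first, your bound $\mu(B(x,r))\le M r^\alpha$ holds only for centers $x$ in $\{U\le M\}$, so to apply the mass distribution principle to the restriction of $\mu$ to that set you need the routine remark that any ball meeting the set lies in a ball of twice the radius centered in it; second, Frostman's lemma on a Souslin set (i.e.\ that $\mathcal{H}^s(A)>0$ forces a compact subset of positive $\mathcal{H}^s$-measure, which is Davies' capacitability theorem) is assumed rather than proved, though you correctly identify it as the only place where the Souslin hypothesis enters and where the real work lies.
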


Let $X$ be a fractional Brownian motion in~$\R^d$ of Hurst index~$H$. For $(s,x) \in \R_+ \times \R^d$ we define the difference kernel
\begin{align}\label{eq:defigamma}
 \ig(s,x) = \E{\frac{1}{(\|X_s+x\|^2 + s^2)^{\gamma/2}}}.
\end{align}


\begin{lemma}\label{lem:capdim}
Let $X$ be a fractional Brownian motion in $\R^d$ of Hurst index~$H$ and let $f:[0,1]\to \R^d$ be a Borel measurable function. Let~$A$ be a closed subset of $[0,1]$.
If $\cpc{\ig}{\gra{f}{A}}>0$, then almost surely
\[
\cpc{\gamma}{\gra{X+f}{A}}>0.
\]
\end{lemma}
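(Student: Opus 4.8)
The plan is to use the energy/capacity method via Frostman's theorem (Theorem~\ref{thm:dimcap}). Since $\cpc{\ig}{\gra{f}{A}} > 0$ means the $\ig$-capacity of the deterministic graph $\gra{f}{A}$ is positive, there is a probability measure $\nu$ supported on $\gra{f}{A}$ with finite $\ig$-energy, i.e. $\EE_{\ig}(\nu) < \infty$. The goal is to push this measure forward under the random map $(t, f(t)) \mapsto (t, X_t + f(t))$ to produce a random probability measure $\mu$ on $\gra{X+f}{A}$, and then show that $\mu$ almost surely has finite $\gamma$-energy, which by Frostman yields $\cpc{\gamma}{\gra{X+f}{A}} > 0$ almost surely.

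\textbf{Key steps.} First I would set up the pushforward: writing $\nu$ as a measure on $\gra{f}{A} \subseteq \R_+ \times \R^d$, define $\mu$ to be the image of $\nu$ under the (random) map sending $(t, f(t))$ to $(t, X_t + f(t))$. Next, I would compute the expected $\gamma$-energy of $\mu$. Using Fubini and the definition of the Riesz energy,
\[
\E{\EE_\gamma(\mu)} = \E{\int\int \frac{d\mu(u)\,d\mu(v)}{|u - v|^\gamma}} = \int\int \E{\frac{1}{\big(|s - t|^2 + \|(X_s + f(s)) - (X_t + f(t))\|^2\big)^{\gamma/2}}}\,d\nu(s, f(s))\,d\nu(t, f(t)).
\]
The crucial point is to recognize the inner expectation as a value of the difference kernel $\ig$. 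By stationarity of increments of fractional Brownian motion, $X_s - X_t$ has the same law as $X_{|s-t|}$ (up to the appropriate scaling built into $\ig$), so with $\tau = |s - t|$ and $x = f(s) - f(t)$ the integrand becomes exactly $\ig\big(\tau, x\big)$ — matching the kernel \eqref{eq:defigamma} evaluated at the displacement between the two graph points. This shows $\E{\EE_\gamma(\mu)} = \EE_{\ig}(\nu) < \infty$.

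\textbf{Conclusion.} Since the expected $\gamma$-energy is finite, $\EE_\gamma(\mu) < \infty$ almost surely, so $\mu$ is almost surely a probability measure on $\gra{X+f}{A}$ with finite $\gamma$-energy; Frostman's theorem then gives $\cpc{\gamma}{\gra{X+f}{A}} > 0$ almost surely. \textbf{The main obstacle} I anticipate is the precise matching of the random integrand to the kernel $\ig$: one must verify that the law of $X_s - X_t$ (together with the deterministic displacement $f(s) - f(t)$ and the time gap $|s-t|$) reproduces exactly the distribution appearing in the definition of $\ig$ in \eqref{eq:defigamma}, using stationarity of increments and the self-similarity scaling $X_{ct} \eqdist c^H X_t$ of fractional Brownian motion. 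A secondary technical point is handling measurability and the joint integrability needed to apply Fubini, for which closedness (hence compactness) of $A$ and the continuity of $X$ should suffice.
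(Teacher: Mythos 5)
Your proposal is correct and follows essentially the same route as the paper: push the finite-$\ig$-energy measure forward under the random map $t \mapsto (t, X_t+f(t))$, apply Fubini, and identify the expected integrand with $\ig(s-t,f(s)-f(t))$ via stationarity of increments of fractional Brownian motion. One cosmetic remark: the final step needs only the definition of capacity (a probability measure of finite $\gamma$-energy on $\gra{X+f}{A}$ forces $\cpc{\gamma}{\gra{X+f}{A}}>0$), not Frostman's theorem, which the paper reserves for converting capacity bounds into dimension bounds.
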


\begin{proof}[\textbf{Proof}]

Since by assumption $\cpc{\ig}{\gra{f}{A}}>0$, there exists a probability measure $\nu_f$ on $\gra{f}{A}$ with finite energy, i.e.\
\begin{align*}
\int_{\gra{f}{A}} \int_{\gra{f}{A}} \ig(s-t,f(s)-f(t)) \,d\nu_f(s,f(s)) \,d\nu_f(t,f(t)) \\=  \int_A \int_A \ig(s-t,f(s)-f(t)) \,d\nu(s) \,d\nu(t) <\infty,
\end{align*}
where $\nu$ is the measure on $A$ satisfying
$\nu = \nu_f \circ H^{-1}$, where $H((s,f(s))) = s$ is the projection mapping.
We now define a measure $\til{\nu}$ on $\gra{X+f}{A}$ via
\[
\til{\nu}(A) = \nu(\{t: (t,(X+f)(t)) \in A\}).
\]
We will show that this measure has finite $\gamma$ energy. Indeed,
\begin{align*}
\E{\int \int \frac{1}{\norm{x-y}^{\gamma}} \,d\til{\nu}(x) \,d\til{\nu}(y)} &= \E{\int \int \frac{d\nu(s) d\nu(t)}{\left(\norm{(X+f)(t) - (X+f)(s)}^2 + |t-s|^2 \right)^{\gamma/2}}} \\
&= \int \int \ig(s-t,f(s)-f(t)) \,d\nu(s) \,d\nu(t) <\infty,
\end{align*}
and hence it follows that $\cp_{\gamma}(\gra{X+f}{A})>0$ almost surely.
\end{proof}

\begin{lemma}\label{lem:capacity}
Let~$f:[0,1]\to\R^d$ be a bounded Borel measurable function and~$A$ a closed subset of~$[0,1]$. If~$\alpha = \dpsi{\gra{f}{A}}$, then
\[
\min\left\{ \frac{\alpha}{H}, \alpha + d(1-H)\right\} \leq \inf\{\gamma: \ \cp_{\ig}(\gra{f}{A}) =0\}.
\]
\end{lemma}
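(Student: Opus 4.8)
The plan is to prove the statement directly: for every $\gamma < \min\{\alpha/H,\ \alpha+d(1-H)\}$ I will construct a probability measure on $\gra{f}{A}$ of finite $\ig$-energy, whence $\cpc{\ig}{\gra{f}{A}} > 0$; taking the supremum over such $\gamma$ then gives the claimed lower bound on $\inf\{\gamma:\ \cpc{\ig}{\gra{f}{A}}=0\}$. The measure comes from a Frostman-type argument adapted to the \emph{parabolic metric} $\rho((s,x),(t,y)) = |s-t| \vee \norm{x-y}^{1/H}$, whose balls of radius $\delta$ coincide, up to constants, with the parabolic rectangles $[a,a+\delta]\times\prod_i[b_i,b_i+\delta^H]$ in the definition of $\Psi^\beta_H$. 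Since $A$ is compact and $f$ is bounded and Borel, $\gra{f}{A}$ is a bounded Souslin subset of $\R_+\times\R^d$, and for any $\beta < \alpha = \dpsi{\gra{f}{A}}$ we have $\Psi^\beta_H(\gra{f}{A}) > 0$; the standard Frostman lemma, applied in the metric $\rho$, then yields a probability measure $\mu$ on $\gra{f}{A}$ with $\mu(B_\rho(p,\delta)) \lesssim \delta^\beta$ uniformly in $p$ and $\delta$.

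The heart of the argument is a pointwise upper bound on the kernel $\ig$. Using $X_s \eqdist |s|^H Z$ for a standard $d$-dimensional Gaussian $Z$ and rescaling the integral in~\eqref{eq:defigamma} by $v = w/|s|^H$, one obtains
\[
\ig(s,x) = \frac{|s|^{-\gamma H}}{(2\pi)^{d/2}} \int_{\R^d} e^{-|v|^2/2}\big(\norm{v+\xi}^2 + \epsilon^2\big)^{-\gamma/2}\,dv, \qquad \xi = \frac{x}{|s|^H},\ \ \epsilon = |s|^{1-H}.
\]
Estimating this Gaussian integral in the two regimes $|\xi| \lesssim 1$ and $|\xi| \gtrsim 1$ (that is, $|x| \lesssim |s|^H$ and $|x| \gtrsim |s|^H$), and separating $\gamma < d$, where the singularity at $v = -\xi$ is integrable, from $\gamma > d$, where it is regularized at scale $\epsilon$, I expect to reach the uniform bound
\[
\ig(s,x) \lesssim \big(|s| \vee |x|^{1/H}\big)^{-g(\gamma)}, \qquad g(\gamma) := \max\{H\gamma,\ \gamma - d(1-H)\}.
\]
The correct bookkeeping of exponents across these four cases—in particular checking that the tail regime $|x| \gtrsim |s|^H$ contributes only $|x|^{-\gamma} = \rho^{-H\gamma}$ while the bulk regime for $\gamma > d$ contributes $|s|^{d(1-H)-\gamma}$—is the step I expect to be the main obstacle.

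Granting this bound, finiteness of the energy is routine. Writing $p = (s,f(s))$, $q=(t,f(t))$ and identifying $\mu$ with its image $\nu$ under projection onto the time coordinate, we have $\EE_{\ig}(\mu) = \int\int \ig(s-t,f(s)-f(t))\,d\nu(s)\,d\nu(t)$, while the kernel bound reads $\ig(s-t,f(s)-f(t)) \lesssim \rho(p,q)^{-g(\gamma)}$. Decomposing the inner integral over the parabolic annuli $\{q:\ \rho(p,q) \in [2^{-k-1},2^{-k}]\}$ and inserting the Frostman estimate $\mu(B_\rho(p,2^{-k})) \lesssim 2^{-k\beta}$ gives, uniformly in $p$,
\[
\int \ig(s-t,f(s)-f(t))\,d\nu(t) \lesssim \sum_{k \ge 0} 2^{-k(\beta - g(\gamma))},
\]
which converges as soon as $\beta > g(\gamma)$; integrating in $p$ then yields $\EE_{\ig}(\mu) < \infty$. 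Since $\beta < \alpha$ is arbitrary, a measure of finite $\ig$-energy exists whenever $g(\gamma) < \alpha$. As $g$ is continuous and strictly increasing with $g^{-1}(\alpha) = \min\{\alpha/H,\ \alpha + d(1-H)\}$, this holds precisely for $\gamma < \min\{\alpha/H,\ \alpha+d(1-H)\}$, which is the desired conclusion. (At the single value $\gamma = d$ the kernel acquires a harmless logarithmic factor, which does not affect the strict inequalities used above.)
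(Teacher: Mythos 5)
Your argument is correct, and its skeleton is the same as the paper's: a parabolic Frostman measure (the paper's Theorem~\ref{thm:Frostman}), pointwise upper bounds on $\ig$ in a bulk and a tail regime, and a dyadic energy estimate. Where you differ is in execution, and the differences are genuine simplifications. The paper's kernel bound (Lemma~\ref{lem:calculations}) places the regime boundary at $\norm{u}\asymp t^H\sqrt{|\log t|}$ because it treats the tail by crudely splitting on the event $\{t^H\norm{X_1}>\norm{u}/2\}$; the price is paid later, in the $k^{d/2}$ neighbor-counting needed for $I_1$ and in the separate treatment of $\gamma<d$ and $\gamma>d$. Your log-free bound $\ig(s,x)\lesssim\left(|s|\vee\norm{x}^{1/H}\right)^{-g(\gamma)}$ with $g(\gamma)=\max\{H\gamma,\gamma-d(1-H)\}$ is sharper but true: writing $\xi=x/|s|^H$, in the intermediate regime $1\le\norm{\xi}\le C\sqrt{|\log s|}$ one splits the Gaussian integral at $\norm{v+\xi}\ge\norm{\xi}/2$, the singular piece costing $e^{-\norm{\xi}^2/8}$ times $\norm{\xi}^{d-\gamma}$ (if $\gamma<d$) or $\epsilon^{d-\gamma}$, $\epsilon=|s|^{1-H}$ (if $\gamma>d$), and the exponential absorbs the polynomial; this is essentially the computation the paper itself performs to prove \eqref{eq:minprevious} in the image part of Theorem~\ref{thm:dim-graph-image}. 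With that bound, your annulus decomposition in the parabolic metric needs no counting factor and no case split, and the identity $g^{-1}(\alpha)=\min\{\alpha/H,\alpha+d(1-H)\}$ closes the argument cleanly. The only thing keeping this from being a complete proof is that the kernel estimate, which you correctly identified as the crux, is stated as an expectation rather than proved; it does hold, by exactly the two-regime computation you sketch, and your remark that $\gamma=d$ only contributes a harmless logarithm is also correct.
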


Before proving Lemma~\ref{lem:capacity} we show how we can bound from above the kernel $\ig$ in three different regimes.

\begin{lemma}\label{lem:calculations}

Fix $M>0$. There exists a positive constant $C$ such that for all $t\in (0,1/e]$ and all~$u$ satisfying $\|u\|\leq M$, the kernel $\ig$ defined in~\eqref{eq:defigamma} satisfies
\begin{align*}
\ig(t,u) \lesssim
\begin{cases} \norm{u}^{-\gamma} & \text{if $\norm{u} >C t^{H}\sqrt{|\log t|}$,}
\\
t^{d(1-H)-\gamma} &\text{if $\norm{u}\leq C t^H\sqrt{|\log t|}$ and $d<\gamma$,}
\\
t^{-\gamma H} &\text{if $\norm{u}\leq Ct^H\sqrt{|\log t|}$ and $d>\gamma$}.
\end{cases}
\end{align*}
\end{lemma}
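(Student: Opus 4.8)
The plan is to write the kernel explicitly as a Gaussian integral and treat the three regimes separately, exploiting that $X_t$ is a centred Gaussian vector in $\R^d$ with independent coordinates of variance $t^{2H}$. Writing $p_t(z) = (2\pi t^{2H})^{-d/2}\exp(-\norm{z}^2/(2t^{2H}))$ for the density of $X_t$, expressing the expectation in~\eqref{eq:defigamma} as an integral against $p_t$ and changing variables $z = y + u$ gives
\[
\ig(t,u) = \int_{\R^d} \frac{p_t(z-u)}{(\norm{z}^2 + t^2)^{\gamma/2}}\, dz.
\]
I will also repeatedly use the scaling identity $X_t \eqdist t^H Z$, where $Z$ is a standard Gaussian in $\R^d$.

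For the third regime ($d>\gamma$) I would simply discard $t^2$, which only increases the integrand, and scale out $t^H$:
\[
\ig(t,u) \le \E{\norm{X_t+u}^{-\gamma}} = t^{-\gamma H}\,\E{\norm{Z + u/t^H}^{-\gamma}}.
\]
It then suffices to bound $\E{\norm{Z+a}^{-\gamma}}$ uniformly over $a\in\R^d$. Splitting the integral at $\norm{y}=1$ and using that the standard Gaussian density is at most $(2\pi)^{-d/2}$ on $\{\norm{y}\le1\}$ while $\norm{y}^{-\gamma}\le 1$ on $\{\norm{y}>1\}$, the finiteness of $\int_{\norm{y}\le1}\norm{y}^{-\gamma}\,dy$ (valid precisely because $\gamma<d$) yields a constant independent of $a$, and hence $\ig(t,u)\lesssim t^{-\gamma H}$.

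For the second regime ($d<\gamma$) I would instead bound the density by its maximum, $p_t(z-u)\le (2\pi t^{2H})^{-d/2}\lesssim t^{-Hd}$, and pull it outside the integral. The remaining integral $\int_{\R^d}(\norm{z}^2+t^2)^{-\gamma/2}\,dz$ now converges at infinity because $\gamma>d$, and the substitution $z=ts$ shows it equals $c\,t^{d-\gamma}$. Multiplying the two factors gives $\ig(t,u)\lesssim t^{-Hd}\cdot t^{d-\gamma}=t^{d(1-H)-\gamma}$, as required. Note that both of these bounds in fact hold for every $u$; the restriction $\norm{u}\le Ct^H\sqrt{|\log t|}$ is only there to identify them as the dominant estimates.

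The first regime ($\norm{u}>Ct^H\sqrt{|\log t|}$) is the one demanding care and is the main obstacle. Here I would split the integral over $\{\norm{z}\ge\norm{u}/2\}$ and $\{\norm{z}<\norm{u}/2\}$. On the former, $(\norm{z}^2+t^2)^{-\gamma/2}\le (2/\norm{u})^{\gamma}$ and $\int p_t\le 1$, contributing $\lesssim\norm{u}^{-\gamma}$. On the latter one must tame the singularity of the kernel at the origin, but there $\norm{z-u}\ge\norm{u}/2$, so the Gaussian exponential factor is at most $\exp(-\norm{u}^2/(8t^{2H}))\le t^{C^2/8}$. The remaining factors—the normalisation $t^{-Hd}$ and the kernel integral $\int_{\norm{z}<\norm{u}/2}(\norm{z}^2+t^2)^{-\gamma/2}\,dz$, which is $\lesssim t^{d-\gamma}$ when $\gamma>d$ and bounded otherwise—are only polynomial in $t$ with exponents controlled by $d$ and $\gamma$. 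Choosing $C$ large enough that $C^2/8$ exceeds these exponents makes this contribution bounded, hence (since $\norm{u}\le M$ forces $\norm{u}^{-\gamma}\ge M^{-\gamma}$) also $\lesssim\norm{u}^{-\gamma}$. The delicate point is that the threshold must be of order $t^H\sqrt{|\log t|}$ rather than a pure power of $t$: this is exactly the scale at which the Gaussian tail $\exp(-\norm{u}^2/(2t^{2H}))$ becomes a power of $t$, allowing a single constant $C$ to absorb all the polynomial losses simultaneously.
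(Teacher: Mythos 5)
Your proof is correct, and for two of the three regimes it takes a genuinely different route from the paper's. In the first regime (large $\norm{u}$) your argument is essentially the paper's: split according to whether the Gaussian term exceeds $\norm{u}/2$, use the tail bound $e^{-\norm{u}^2/(8t^{2H})}\le t^{C^2/8}$, choose $C$ large, and absorb constants using $\norm{u}\le M$. For the two small-$\norm{u}$ regimes, however, the paper first invokes a correlation inequality for radially decreasing functions (its inequality~\eqref{eq:fkg}) to reduce to the case $u=0$, i.e.\ $\ig(t,u)\le \E{(\norm{t^HX_1}^2+t^2)^{-\gamma/2}}$, and then splits according to whether $\norm{t^H X_1}\le t$. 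You avoid the correlation inequality altogether: for $d>\gamma$ you drop the $t^2$ and use the uniform bound $\sup_{a\in\R^d}\E{\norm{Z+a}^{-\gamma}}<\infty$ (where $Z$ is a standard Gaussian vector, the bound being valid precisely because $\gamma<d$), and for $d<\gamma$ you bound the Gaussian density by its maximum $\lesssim t^{-Hd}$ and integrate the kernel, $\int_{\R^d}(\norm{z}^2+t^2)^{-\gamma/2}\,dz=c\,t^{d-\gamma}$. This is more elementary, and it gives the bounds $t^{-\gamma H}$ (for $d>\gamma$) and $t^{d(1-H)-\gamma}$ (for $d<\gamma$) for \emph{all} $u$, not only for $\norm{u}\le Ct^H\sqrt{|\log t|}$ --- a slightly stronger conclusion that makes transparent, as you note, that the threshold merely selects which estimate is the dominant one. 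What the paper's route buys is economy elsewhere: inequality~\eqref{eq:fkg} is needed again in the proof of the image part of Theorem~\ref{thm:dim-graph-image} (to obtain~\eqref{eq:minprevious}), so establishing it once serves twice. One small imprecision in your first regime: you claim the inner integral $\int_{\norm{z}<\norm{u}/2}(\norm{z}^2+t^2)^{-\gamma/2}\,dz$ is bounded whenever $\gamma\le d$; at $\gamma=d$ it is of order $\log(1/t)$ rather than bounded, but this logarithm is still absorbed by the factor $t^{C^2/8}$ once $C$ is chosen large, so your argument stands as written in spirit.
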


\begin{proof}[\textbf{Proof}]
By scaling invariance of fractional Brownian motion we have
\[
\ig(t,u) = \E{\frac{1}{\left( \norm{t^{H} X_1 + u}^2 + t^2    \right)^{\gamma/2}}}.
\]
Let $C$ be a constant to be determined and let $\norm{u} > C t^H\sqrt{ |\log t|}$. By the Gaussian tail estimate we have
\[
\pr{t^H\norm{X_1} >\frac{\norm{u}}{2}} \leq 2 e^{-\frac{\norm{u}^2}{8t^{2H}}} \leq 2 e^{-\frac{C^2\log(1/t)}{8}} \leq 2 t^{C^2/8}.
\]
On the event $\{t^H \norm{X_1} < \norm{u}/2\}$ we have
\[
\norm{t^H X_1 + u} \geq \norm{u} - t^H \norm{X_1} \geq \frac{\norm{u}}{2}.
\]
Therefore, taking $C$ sufficiently large we get
\begin{align*}
\E{\frac{1}{\left( \norm{t^H X_1 + u}^2 + t^2    \right)^{\gamma/2}}} &\lesssim \pr{t^H\norm{X_1} >\frac{\norm{u}}{2}} \frac{1}{t^{\gamma}} + \pr{t^H\norm{X_1} \leq \frac{\norm{u}}{2}} \frac{1}{\norm{u}^\gamma} \\
&\lesssim  t^{C^2/8 -\gamma} + \norm{u}^{-\gamma} \lesssim
\norm{u}^{C^2/8 - \gamma} +
\norm{u}^{-\gamma} \lesssim \norm{u}^{-\gamma},
\end{align*}
since $\norm{u}\leq M$ and this finishes the proof of the first part.
Next, let $\norm{u} \leq  C t^H\sqrt{ |\log t|}$.
Then
\begin{align*}
\E{\frac{1}{\left(\norm{t^H X_1 + u}^2 + t^2 \right)^{\gamma/2}}} = \int \frac{1}{\left(\norm{x+u}^2 + t^2 \right)^{\gamma/2}} e^{-\frac{\norm{x}^2}{2t^{2H}}}\,dx = \int f(x+u) g(x)\,dx,
\end{align*}
where $f(x)  = \left(\norm{x}^2+t^2 \right)^{-\gamma/2}$ and $g(x) = e^{-\norm{x}^2/(2t^{2H})}$. Since they are both decreasing as functions of $\norm{x}$, it follows that
\begin{align}\label{eq:fkg}
\int (f(x+u) - f(x)) (g(x+u) - g(x)) \,dx \geq 0,
\end{align}
and hence this gives
\begin{align*}
\E{\frac{1}{\left(\norm{t^HX_1 + u}^2 + t^2 \right)^{\gamma/2}}} &\leq \E{\frac{1}{\left(\norm{t^H X_1}^2 + t^2 \right)^{\gamma/2}}}
\leq \frac{1}{t^\gamma} \pr{\norm{t^H X_1} \leq t}
+ \frac{1}{t^{\gamma H}}\E{\frac{\1(\norm{t^HX_1}>t)}{\norm{X_1}^\gamma}} \\
&= \frac{1}{t^\gamma}  \int_{\B(0,t^{1-H})} \frac{1}{(2\pi)^{d/2}} e^{-\norm{x}^2/2} \,dx+ \frac{1}{t^{\gamma H}} \int_{\B(0,t^{1-H})^c}  \frac{1}{(2\pi)^{d/2}\norm{x}^{\gamma}} e^{-\norm{x}^2/2} \,dx \\
&\lesssim t^{(1-H)d-\gamma} + t^{-\gamma H} \int_{t^{1-H}}^{\infty} r^{d-1-\gamma} e^{-r^2/2}\,dr
\\&\lesssim t^{(1-H)d-\gamma} + t^{-\gamma H} \int_{t^{1-H}}^{1} r^{d-1-\gamma} \,dr + c_1,
\end{align*}
where $c_1$ is a positive constant. If $d>\gamma$, then from the above we deduce that
\[
\ig(t,u) \lesssim t^{-\gamma H},
\]
while when $d<\gamma$, then
\[
\ig(t,u) \lesssim t^{d(1-H)-\gamma}
\]
and this concludes the proof of the lemma.
\end{proof}

The next theorem is the analogue of Frostman's theorem for parabolic Hausdorff dimension.The statement can be found in Taylor and Watson~\cite[Lemma~4]{TaylorWatson} and the proof follows along the same lines as the proof of Frostman's theorem for Hausdorff dimension. We include the statement here for the reader's convenience.

\begin{theorem}[Frostman's theorem]\label{thm:Frostman} Let $A$ be a Borel set.
If $\dpsi{A} >\beta$, then there exists a Borel probability measure $\mu$ supported on $A$ such that
\[
\mu([a,a+\delta] \times \cup_{j}[b_{j,1},b_{j,1}+ \delta^H] \times \ldots [b_{j,d},b_{j,d} + \delta^H]) \leq C\delta^\beta,
\]
where $C$ is a positive constant.
\end{theorem}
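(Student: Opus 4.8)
The plan is to adapt the classical mass-distribution proof of Frostman's lemma (in the form given in~\cite{Carlesson}) to the anisotropic parabolic scaling, in which a box of time-width $\delta$ has space-width $\delta^H$ in each coordinate. The hypothesis $\dpsi{A}>\beta$ is, by the very definition of $\dpsi{\cdot}$, equivalent to the positivity of the parabolic content, $\Psi^\beta_H(A)>0$, and this is the only input available. Since this content, viewed as a set function, is capacitable on Souslin sets, I would first reduce to the case that $A$ is compact and contained in a fixed parabolic box $Q_0$: if $\Psi^\beta_H(A)>0$ then $\Psi^\beta_H(K)>0$ for some compact $K\subseteq A$, and a measure supported on $K$ serves for $A$.

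Next I would fix an anisotropic dyadic hierarchy of parabolic boxes, so that at generation $k$ the time side is comparable to $2^{-k}$ and each space side is comparable to $2^{-kH}$. The one subtlety is nesting, since for general $H$ the contraction factor $2^{-H}$ in the space directions is not the reciprocal of an integer. I would therefore build the hierarchy so that it nests by construction, letting the time side halve at each step and the space sides contract by a fixed reciprocal-integer factor closest to $2^{-H}$; each generation-$k$ box is then comparable, with absolute constants, to a genuine parabolic box of time-width $2^{-k}$ and space-width $2^{-kH}$, and has a bounded number of children. This turns the family of boxes into a tree $\mathcal{T}$ and lets me compare $\Psi^\beta_H$ computed with arbitrary parabolic boxes to the one computed with boxes of $\mathcal{T}$, up to a multiplicative constant.

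The core of the argument is the following upward sweep. For each large $n$, I initialize a measure $\mu_n$ by placing mass $2^{-n\beta}$ on each generation-$n$ box that meets $A$, and then move up through the generations: for each box $Q$ of generation $k$, if the mass currently carried by its descendants exceeds $2^{-k\beta}$, I rescale that mass so that $\mu_n(Q)=2^{-k\beta}$, and otherwise leave it untouched. By construction this yields the upper bound $\mu_n(Q)\le C\delta_Q^\beta$ for every $Q\in\mathcal{T}$, where $\delta_Q$ denotes the time-width of $Q$. For the matching lower bound on the total mass, I would consider the maximal boxes at which capping occurred, together with the uncapped generation-$n$ boxes meeting $A$: these are pairwise disjoint, they cover $A$, and on each of them $\mu_n$ has mass exactly $\delta_Q^\beta$. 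Hence $|\mu_n|$ equals $\sum\delta_Q^\beta$ over a cover of $A$, so $|\mu_n|\ge c\,\Psi^\beta_H(A)>0$ for an absolute constant $c$. This balance between the global lower bound (the cover realizing the content) and the local upper bounds is the heart of Frostman's lemma and the step I expect to be the main obstacle to write out carefully.

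Finally I would extract a weak-$*$ subsequential limit $\mu$ of the $\mu_n$, which is possible because they have total mass bounded above by $\delta_{Q_0}^\beta$ and a common compact support. Since $\mu_n$ lives on the $2^{-n}$-neighbourhood of $A$ and $A$ is compact, $\mu$ is supported on $A$; the lower bound gives $\mu(A)\ge c\,\Psi^\beta_H(A)>0$, and the upper bound $\mu_n(Q)\le C\delta_Q^\beta$ passes to the limit on slightly enlarged closed boxes. An arbitrary parabolic box $[a,a+\delta]\times[b_1,b_1+\delta^H]\times\cdots\times[b_d,b_d+\delta^H]$ is covered by a bounded number of boxes of $\mathcal{T}$ of comparable width, so the estimate extends to all parabolic boxes with the stated form; normalizing $\mu$ to a probability measure then yields the desired Frostman measure.
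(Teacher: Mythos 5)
First, a point of context: the paper does not actually prove this theorem --- it cites Taylor and Watson~\cite{TaylorWatson} (their Lemma~4) and remarks that the proof follows the classical Frostman argument --- so writing out the mass-distribution proof is exactly the intended spirit. However, there is a concrete step in your construction that fails as stated: the nested hierarchy. You let the time side halve at each generation and the space sides contract by ``a fixed reciprocal-integer factor closest to $2^{-H}$''. Since $H\in[0,1]$ we have $2^{-H}\in[1/2,1]$, so this fixed factor is either $1$ or $1/2$; in the first case the space width at generation $k$ is $1$, in the second it is $2^{-k}$, and in neither case is it comparable to $2^{-kH}$ with constants independent of $k$: the ratios $2^{kH}$ and $2^{-k(1-H)}$ degenerate geometrically. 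More generally, any fixed per-generation contraction $1/m$ gives space width $m^{-k}=2^{-k\log_2 m}$, and uniform comparability with $2^{-kH}$ forces $m=2^{H}$, which is not an integer for $0<H<1$. This comparability is precisely what you use twice --- to compare the tree-based content with $\Psi^\beta_H$, and at the end to cover an arbitrary parabolic box by boundedly many tree boxes of comparable scale --- so the argument as written collapses at this point.

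The repair is standard and cheap: halve in time at every generation, but halve each space side only at those generations $k$ at which $\lfloor kH\rfloor$ increases, so that the space width at generation $k$ is $2^{-\lfloor kH\rfloor}\in[2^{-kH},\,2\cdot 2^{-kH}]$. This hierarchy nests, each box has at most $2\cdot 2^{d}$ children, and every box is within an absolute factor of a true parabolic box; with this substitution your upward capping sweep, the disjointness of the maximal capped boxes covering $A$, and the weak-$*$ limit all go through verbatim. Two further points deserve care. First, the reduction from Borel $A$ to compact $A$ is not free: inner regularity of $\Psi^\beta_H$ on Borel (Souslin) sets is exactly the nontrivial ingredient that the classical literature handles via net measures and Davies-type theorems, and it is what Taylor and Watson's formulation is designed for; asserting capacitability in one line hides real work. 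Second, a shortcut worth knowing: parabolic boxes are, up to absolute constants, balls of the metric $\rho((s,x),(t,y))=\max\{|s-t|,\|x-y\|^{1/H}\}$, so the theorem is literally Frostman's lemma in the compact metric space $([0,1]\times[-M,M]^d,\rho)$, and one may quote the metric-space version of Frostman's lemma instead of rebuilding the dyadic machinery by hand.
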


We now give the proof of Lemma~\ref{lem:capacity}.

\begin{proof}[\textbf{Proof of Lemma~\ref{lem:capacity}}]

Let $\beta = \alpha - \epsilon/2$.
Since the graph of a Borel function is always a Borel set, it follows by Theorem~\ref{thm:Frostman} that there exists a probability measure $\mu$ supported on $\gra{f}{A}$ such that
\begin{align}\label{eq:frostman}
\mu([a,a+\delta] \times \cup_{j=1}^{d} [b_j,b_j + \delta^H]) \leq c_2 \delta^\beta.
\end{align}
From this it follows that the measure~$\mu$ is non-atomic.
Suppose first that $\min\{\alpha/H, \alpha+d(1-H)\} = \alpha/H$.
Let $\gamma = \beta/H - \epsilon<d$. We show that $\cpc{\ig}{\gra{f}{A}}>0$. It suffices to prove that
\begin{align}\label{eq:goalenergy}
\EE_{\ig}(\mu) < \infty.
\end{align}
Since $\gamma<d$ and $f$ is bounded on $[0,1]$, if we define
\begin{align*}
I_1 & = \iint\limits_{|s-t|<1/e}  \1(\norm{f(t) - f(s) } \leq C |t-s|^H\sqrt{|\log|t-s||}) |t-s|^{-\gamma H} \,d\mu((s,f(s))) d\mu((t,f(t))), \\
I_2 &=\iint\limits_{|s-t|<1/e} \1(\norm{f(t) - f(s) } > C |t-s|^H\sqrt{|\log|t-s||}) \norm{f(t) - f(s)}^{-\gamma} \,d\mu((s,f(s))) d\mu((t,f(t))),
\end{align*}
then from Lemma~\ref{lem:calculations} we get that
\begin{align*}
\EE_{\ig}(\mu) &= \iint \E{\frac{1}{( \norm{X_s - X_t + f(s) - f(t)}^2 + |t-s|^2  )^{\gamma/2}}} \,d\mu((s,f(s))) d\mu((t,f(t))) \lesssim
e^{\gamma} + I_1 + I_2.
\end{align*}
We first show that $I_1<\infty$. Since $\mu$ is non-atomic, we have
\begin{align}\label{eq:i1first}
I_1 \leq \sum_{k=0}^{\infty} 2^{k\gamma H} \mu\otimes \mu\{2^{-k} \leq |t-s| < 2^{-k+1}, \norm{f(t) - f(s)} \leq C2^{-k H}\sqrt{k} \}.
\end{align}
Let $M = \max_{t\in[0,1]} \norm{f(t)}<\infty$. Then the measure~$\mu$ is supported on~$[0,1]\times[-M,M]^d$. For $k>0$, we partition the space $[0,1]\times[-M,M]^d$ into rectangles of of dimensions $2^{-k}\times 2^{-k H}\times\ldots\times 2^{-k H}$. We let $\D_k$ be the collection of rectangles of generation~$k$. For two rectangles $Q,Q'$ of the same generation we write $Q\sim Q'$ if
there exist $(s,x) \in Q, (t,y) \in Q'$ such that $2^{-k} \leq |s-t| < 2^{-k+1}$ and $\norm{x-y} \leq C2^{-k H} \sqrt{k}$. Then from~\eqref{eq:i1first} we obtain
\begin{align*}
I_1 \leq \sum_{k=0}^{\infty} 2^{k\gamma H} \sum_{\substack{Q,Q' \in \D_k \\ Q \sim Q'}} \mu\otimes \mu (Q\times Q') =  \sum_{k=0}^{\infty} 2^{k\gamma H} \sum_{\substack{Q,Q' \in \D_k \\ Q \sim Q'}} \mu(Q) \mu(Q').
\end{align*}
We now notice that if we fix $Q\in \D_k$, then the number of $Q'$ such that $Q\sim Q'$ is up to constants~$k^{d/2}$. Using the obvious inequality
\begin{align}\label{eq:easyineq}
\mu(Q) \mu(Q') \leq \frac{1}{2}(\mu(Q)^2 + \mu(Q')^2)
\end{align}
and~\eqref{eq:frostman} to get~$\mu(Q)\leq c_2 2^{-k\beta}$ we deduce
\begin{align*}
I_1 \lesssim \sum_{k=0}^{\infty} 2^{k\gamma H} k^{d/2} \sum_{Q \in \D_k} \mu(Q)^2 \lesssim  \sum_{k=0}^{\infty} 2^{k\gamma H} k^{d/2} 2^{-k\beta}\sum_{Q \in \D_k} \mu(Q) = \sum_{k=0}^{\infty} \frac{k^{d/2}}{2^{k\epsilon H}}<\infty,
\end{align*}
since $\sum_{Q\in \D_k} \mu(Q)=1$ as $\mu$ is a probability measure. It remains to show that $I_2<\infty$. By defining a new equivalence relation on rectangles in $\D_k$, i.e.\ that $Q\sim Q'$ if there exist $(s,x)\in Q, (t,y)\in Q'$ such that $|t-s|\leq 2^{-k}$ and $2^{-k H}\leq \norm{f(t)-f(s)} <2^{-kH  + H}$ we get
\begin{align*}
I_2 \lesssim \sum_{k=0}^{\infty} 2^{k\gamma H}\mu\otimes \mu\{2^{-k H}\leq \norm{f(t)- f(s)} \lesssim 2^{-k H + H}, |t-s| \leq 2^{-k}\} \lesssim \sum_{k=0}^{\infty} 2^{k\gamma H} 2^{-k\beta} <\infty,
\end{align*}
where we used~\eqref{eq:easyineq} again and the fact that the number of $Q'\in \D_k$ such that $Q\sim Q'$ is of order~$1$. This completes the proof in the case when $\alpha/H<d$. Suppose now that $\alpha/H> d$.
Take $\epsilon>0$ small enough such that $\alpha-2\epsilon>dH$ and set $\beta = \alpha - \epsilon$. Let $\gamma = \beta + d(1-H) - \epsilon$.
Then using the measure~$\mu$ from~\eqref{eq:frostman} and following the same steps as above we can write the same expression for the energy. Then, since $\gamma>d$, the quantity $I_1$ in view of  Lemma~\ref{lem:calculations} is bounded by
\[
I_1 \lesssim \int \int  \1(\norm{f(t) - f(s) } \leq C |t-s|^H\sqrt{|\log|t-s||}) |t-s|^{d(1-H)-\gamma} \,d\mu((s,f(s))) d\mu((t,f(t))).
\]
Following the same steps as earlier we deduce
\begin{align*}
I_1 \lesssim \sum_{k=0}^{\infty} 2^{-k(d(1-H) - \gamma)} k^{d/2} 2^{-k\beta} = \sum_{k=0}^{\infty} 2^{-k\epsilon} k^{d/2}<\infty.
\end{align*}
For the quantity~$I_2$ in the same was as above we have
\[
I_2\lesssim \sum_{k=0}^{\infty} 2^{k\gamma H} 2^{-k\beta} = \sum_{k=0}^{\infty} 2^{-k((1-H)(\alpha - d H) +2\epsilon H -\epsilon )}<\infty,
\]
since $\alpha-2\epsilon>dH$ and this completes the proof of the lemma.
\end{proof}

\begin{claim}\label{cl:infsup}
Let $A\subseteq \R\times\R^d$. Then
\[
\inf\{\gamma: \cpc{\ig}{A} =0\} = \sup\{\gamma: \cpc{\ig}{A}>0\}.
\]
\end{claim}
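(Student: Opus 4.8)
The plan is to reduce the stated equality to a single monotonicity property of the capacity, namely that
\[
\cpc{I_{\gamma,H}}{A} > 0 \quad\text{and}\quad 0 < \gamma' < \gamma \qquad \Longrightarrow \qquad \cpc{I_{\gamma',H}}{A} > 0.
\]
Granting this, write $S_+ = \{\gamma > 0 : \cpc{\ig}{A} > 0\}$ and $S_0 = \{\gamma > 0 : \cpc{\ig}{A} = 0\}$; these sets partition $(0,\infty)$ since the capacity is either zero or positive. The displayed implication says $S_+$ is downward closed, hence every element of $S_+$ lies weakly below every element of $S_0$, giving $\sup S_+ \le \inf S_0$. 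Conversely, were $\sup S_+ < \inf S_0$, any $\gamma^*$ strictly between them would lie in neither $S_+$ nor $S_0$, which is impossible; thus $\sup S_+ = \inf S_0$, which is precisely the claim. (Both sets are nonempty in the situations of interest — small exponents give finite energy because $\ig \to 1$ as $\gamma \downarrow 0$, while large exponents kill the capacity — so the common value is a genuine finite threshold.)

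To establish the monotonicity I would argue through the energy: $\cpc{\ig}{A} > 0$ is equivalent to the existence of a probability measure $\mu$ on $A$ with $\EE_{\ig}(\mu) < \infty$, and the same $\mu$ will serve for the smaller exponent. The one estimate needed is the pointwise kernel bound
\[
I_{\gamma',H}(u,v) \le 1 + I_{\gamma,H}(u,v) \qquad \text{for all }(u,v)\text{ and all } 0 < \gamma' < \gamma .
\]
Writing $\rho = (\norm{X_{|u|}+v}^2 + u^2)^{1/2}$, so that $I_{\gamma,H}(u,v) = \E{\rho^{-\gamma}}$, this follows realisation by realisation: on $\{\rho \ge 1\}$ we have $\rho^{-\gamma'} \le 1$, while on $\{\rho < 1\}$ we have $\rho^{-\gamma'} \le \rho^{-\gamma}$ since the base is below $1$ and $\gamma' < \gamma$, so that $\rho^{-\gamma'} \le 1 + \rho^{-\gamma}$ always; taking expectations gives the display. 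Integrating against $\mu \otimes \mu$ then yields $\EE_{I_{\gamma',H}}(\mu) \le 1 + \EE_{\ig}(\mu) < \infty$, so $\cpc{I_{\gamma',H}}{A} > 0$, as required.

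The step demanding the most care is the kernel comparison, precisely because $\ig$ is \emph{not} monotone in $\gamma$: for a realisation with $\rho > 1$ the value $\rho^{-\gamma}$ decreases in $\gamma$, whereas for $\rho < 1$ it increases, so one cannot dominate the smaller-exponent kernel by the larger one directly. The additive constant in the bound is exactly what absorbs the region $\{\rho \ge 1\}$, and it makes the comparison valid with no boundedness assumption on $A$; this is the only substantive idea, the rest being bookkeeping. I would also observe that the bound is still correct where $I_{\gamma,H}(u,v) = +\infty$ — for instance on the time–space diagonal, where $X_0 = 0$ forces $\rho = 0$ — so no special treatment of the diagonal is required.
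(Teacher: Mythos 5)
You should know at the outset that the paper contains no proof of Claim~\ref{cl:infsup} at all: it is stated bare and used later in the proof of Theorem~\ref{thm:dim-graph-image}, so there is nothing to compare your argument against, and what you have written supplies a proof the authors omitted. On its merits, your argument is correct and identifies exactly the right substance. The equality is not a set-theoretic triviality; it is equivalent to downward closedness of $S_+=\{\gamma>0:\cpc{\ig}{A}>0\}$, and the only obstruction is the one you isolate: the kernel $\ig$ is not pointwise monotone in $\gamma$, since $\rho^{-\gamma}$ increases in $\gamma$ where $\rho<1$ and decreases where $\rho>1$. The pointwise bound $\rho^{-\gamma'}\leq 1+\rho^{-\gamma}$ (valid also at $\rho=0$, where both sides are infinite) gives $I_{\gamma',H}\leq 1+\ig$, and because the comparison measure is a probability measure this integrates to $\EE_{I_{\gamma',H}}(\mu)\leq 1+\EE_{\ig}(\mu)$, so finiteness of energy propagates downward in $\gamma$. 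This additive-constant trick is the standard device for such threshold statements (it is the same move that makes Riesz capacities define a critical exponent), and your bookkeeping converting downward closedness into $\sup S_+=\inf S_0$ is sound.

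One caveat, which concerns the claim as stated as much as your proof: your justification that both sets are nonempty --- ``small exponents give finite energy because $\ig\to 1$ as $\gamma\downarrow 0$'' --- is wrong in general. Since $X_0=0$ deterministically, $\ig(0,0)=\infty$ for every $\gamma>0$, so finite energy forces $\mu$ to be non-atomic and to have some dimension-type regularity; for a countable set $A$ (even a single point), or for $A=\{0\}\times C$ with $C$ compact of Hausdorff dimension zero (where $\EE_{\ig}(\mu)$ is exactly the Riesz $\gamma$-energy of the projection of $\mu$), one has $\cpc{\ig}{A}=0$ for \emph{every} $\gamma>0$, so $S_+\cap(0,\infty)=\emptyset$ and the claimed equality survives only by a convention on $\sup\emptyset$ (or by allowing $\gamma\le 0$, where the capacity is trivially positive). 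This degeneracy is harmless where the claim is actually used: it is applied to $\gra{f}{A}$ with $f$ bounded, and when $\dpsi{\gra{f}{A}}>0$ Lemma~\ref{lem:capacity} forces $S_0\cap(0,c)=\emptyset$ for some $c>0$, hence $S_+\supseteq(0,c)$ is nonempty, while Lemma~\ref{lem:capdim} together with Frostman's theorem (Theorem~\ref{thm:dimcap}) gives $\cpc{\ig}{\gra{f}{A}}=0$ for $\gamma>d+1$, so $S_0\neq\emptyset$; and when $\dpsi{\gra{f}{A}}=0$ the lower bound in Theorem~\ref{thm:dim-graph-image} is trivial anyway. So simply flag the degenerate case instead of justifying nonemptiness by the limit $\gamma\downarrow 0$; with that repair your proof is complete.
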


%

\begin{proof}[\textbf{Proof of Theorem~\ref{thm:dim-graph-image}}]
(dimension of the graph)

We first assume that $f$ is bounded.
We set $\alpha=\dpsi{\gra{f}{A}}$.
In view of Corollary~\ref{cor:parabdim} we only need to show that almost surely
\begin{align}\label{eq:goaldim}
\dim(\gra{X+f}{A}) \geq \alpha/H \wedge \left(\alpha + d(1-H) \right).
\end{align}
Claim~\ref{cl:infsup} gives that
\[
\inf\{\gamma: \ \cpc{\ig}{\gra{f}{A}} =0\} = \sup\{\gamma: \ \cpc{\ig}{\gra{f}{A}} >0\} = \gamma_*.
\]
Let $\gamma_n$ be such that $\cpc{I_{\gamma_n,H}}{\gra{f}{A}}>0$ and $\gamma_n\to \gamma_*$ as~$n\to \infty$. Then by Lemma~\ref{lem:capdim} we get that for all~$n$ a.s.\ $\cpc{\gamma_n}{\gra{X+f}{A}}>0$, and hence a.s.\
\[
\dim(\gra{X+f}{A}) \geq \gamma_n \ \text{ for all } \ n,
\]
which gives that almost surely~$\dim(\gra{X+f}{A})\geq \gamma_*$.
This combined with Lemma~\ref{lem:capacity} implies that almost surely
\[
\dim(\gra{X+f}{A}) \geq \min\left\{\frac{\alpha}{H},\left(\alpha+d(1-H)\right)\right\}
\]
and this concludes the proof in the case when $f$ is bounded. For the general case, we define the increasing sequence of sets $A_n = \{s\in A: |f(s)| \leq n\}$. Then by the countable stability property of Hausdorff and parabolic dimension we have
\begin{align}\label{eq:increasing}
\dim(\gra{X+f}{A}) = \sup_{n} \dim(\gra{X+f}{A_n}) \quad \text{and} \quad \dpsi{\gra{f}{A_n}}\uparrow \dpsi{\gra{f}{A}}.
\end{align}
From above we have
\begin{align*}
\dim(\gra{X+f}{A_n})= \min\left\{ \frac{\dpsi{\gra{f}{A_n}}}{H} , \dpsi{\gra{f}{A_n}}+d(1-H)\right\}.
\end{align*}
Using this and~\eqref{eq:increasing} proves the theorem in the general case.
\end{proof}

\begin{proof}[{\textbf{Proof of Theorem~\ref{thm:dim-graph-image}}}]
(dimension of the image)

As in the proof of Theorem~\ref{thm:dim-graph-image} in the case of the graph, we can assume that $f$ is bounded. The general case follows exactly in the same way as for the graph.

The dimension of the image satisfies
\[
\dim (\ima{X+f}{A}) \leq \dim(\gra{X+f}{A}) \leq  \frac{\dpsi {\gra{X+f}{A}}}{H} = \frac{\dpsi{\gra{f}{A}}}{H} =\frac{\alpha}{H},
\]
where the second inequality follows from Lemma~\ref{lem:dim-dimpsi-general} and the first equality follows from Lemma~\ref{lem:equaldpsi}. Hence the upper bound on the dimension of~$\ima{X+f}{A}$ is immediate. It only remains to show the lower bound. Let $\beta = \alpha\wedge d H -\epsilon H$ and $\gamma = \beta/H - \epsilon$. Then since the image of a Borel set under a Borel measurable function is a Souslin set (see for instance~\cite{Kechris}), it follows from Theorem~\ref{thm:dimcap} that
 it suffices to show that $\cpc{\gamma}{\ima{X+f}{A}} >0$, i.e.\ it is enough to find a measure of finite $\gamma$-energy. By Theorem~\ref{thm:Frostman} there exists a probability measure $\mu$ on $\gra{f}{A}$ such that
\[
\mu([a,a+\delta] \times \cup_{j=1}^{d} [b_j, b_j + \delta^H) \leq \delta^\beta.
\]
Let $H$ be the projection mapping from $\gra{f}{A}$ to $A$, i.e.\ $H((s,f(s)))=s$ for all $s$. Let $\nu$ be the measure on~$A$ such that
\[
\nu = \mu \circ H^{-1}.
\]
Let $\til{\mu}$ be a measure on $\gra{X+f}{A}$ given by
\[
\til{\mu}(R) = \nu((X+f)^{-1}(R))
\]
where $R \subseteq \ima{X+f}{A}$.
We will show that almost surely
\[
\EE_\gamma(\til{\mu}) = \int \int \frac{d\til{\mu}(x) d\til{\mu}(y)}{\norm{x-y}^{\gamma}} <\infty.
\]
Taking expectations we get
\begin{align*}
\E{\EE_{\gamma}(\til{\mu})} = \int \int  \E{\frac{1}{\norm{X_s - X_t + f(s) - f(t)}^\gamma}}  d\mu((s,f(s))) d\mu((t,f(t))).
\end{align*}
We now show that
\begin{align}\label{eq:minprevious}
\E{\frac{1}{\norm{X_s - X_t + f(s) - f(t)}^\gamma}} \lesssim \min\{\norm{f(t) - f(s)}^{-\gamma}, |t-s|^{-\gamma H} \}.
\end{align}
The calculations that lead to~\eqref{eq:minprevious} can be found in the proof of~\cite[Theorem~1.8]{PS10}, but we include the details here for the convenience of the reader.
Using~\eqref{eq:fkg} we have
\begin{align*}
 \E{\frac{1}{\norm{X_s - X_t + f(s) - f(t)}^\gamma}} \leq \E{\frac{1}{\norm{X_s - X_t}^\gamma}} \lesssim |t-s|^{-\gamma H}.
\end{align*}
We set $u=(f(s)-f(t))/|s-t|^H$ and we get
\begin{align*}
\E{\frac{1}{\norm{X_s - X_t + f(s) - f(t)}^\gamma}} = \frac{1}{|t-s|^{\gamma H}} \int_{\R^d} \frac{1}{(2\pi)^{d/2} \norm{x+u}^\gamma} e^{-\norm{x}^2/2}\,dx.
\end{align*}
We now upper bound the last integral appearing above
\begin{align*}
\int_{\R^d} \frac{1}{\norm{x+u}^\gamma} e^{-\norm{x}^2/2}\,dx &= \int_{\norm{x+u}\geq \norm{u}/2} \frac{1}{\norm{x+u}^\gamma} e^{-\norm{x}^2/2}\,dx  + \int_{\norm{x+u}< \norm{u}/2} \frac{1}{\norm{x+u}^\gamma} e^{-\norm{x}^2/2}\,dx  \\
&\lesssim \frac{1}{\norm{u}^{\gamma}} + e^{-\norm{u}^2/4} \int_{\norm{x}<\norm{u}} \frac{1}{\norm{x}^{\gamma}} \,dx \lesssim \norm{u}^{-\gamma},
\end{align*}
where the last step follows from passing to polar coordinates and using the fact that $d>\gamma$. Therefore multiplying the last upper bound by $|t-s|^{-\gamma H}$  proves~\eqref{eq:minprevious}. We now  need to decompose the energy in these two regimes, i.e.\ for $\norm{f(t) - f(s)} \leq |t-s|^H$ and $\norm{f(t)-f(s)}>|t-s|^H$. This now follows in the same way as the proof that $I_1, I_2<\infty$ in the proof of Lemma~\ref{lem:capacity}.
\end{proof}

\section{Self-affine sets}\label{sec:self-affine}

In this section we give the proofs of Corollaries~\ref{cor:selfaffine} and~\ref{cor:strictineq}. We start by calculating the parabolic Hausdorff dimension of any self-affine set as defined in the Introduction. Then we use Theorem~\ref{thm:dim-graph-image} to prove Corollary~\ref{cor:selfaffine}.

\begin{lemma}\label{lem:self-affine}
Let~$n>m$ and let~$D\subseteq \{0,\ldots, n-1\} \times \{0,\ldots, m-1\}$ be a pattern. If $\log_n
(m)<H$, then
\[
\dpsi{K(D)} = H\log_m\left(\sum_{j=0}^{m-1}r(j)^{\log_n(m)/H} \right),
\]
where $r(j) = \sum_{\ell=0}^{m-1} \1((j,\ell) \in D)$.
\end{lemma}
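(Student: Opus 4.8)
The plan is to read the parabolic content as a Hausdorff content in a rescaled metric and then run the Bedford--McMullen computation in that metric. Writing $d_H\big((t,x),(s,y)\big)=\max\{|t-s|,\,|x-y|^{1/H}\}$, a rectangle $[a,a+\delta]\times[b,b+\delta^H]$ is, up to constants, a $d_H$-ball of radius $\delta$, so $\Psi^\beta_H$ is the $\beta$-dimensional Hausdorff content of $(\R_+\times\R,d_H)$ and $\dpsi{\,\cdot\,}$ is Hausdorff dimension in that metric. Here the horizontal base-$n$ expansion contracts lengths by $n^{-1}$, while one vertical base-$m$ step becomes a $d_H$-distance $m^{-1/H}$; thus $K(D)$ is a McMullen--Bedford carpet with horizontal base $n$ and vertical ratio $m^{1/H}$, and the hypothesis $\log_n m<H$ is exactly the requirement $n>m^{1/H}$ that the horizontal direction contracts more strongly. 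Put $s=\log_n m$, $t=s/H\in(0,1)$ and $Z=\sum_{j} r(j)^t$. McMullen's formula with base $n$ and vertical ratio $m^{1/H}$ replaces the exponent $\log_n m$ by $\log_n m^{1/H}=t$ and $\log_m$ by $H\log_m$, giving exactly $\beta^\ast:=H\log_m Z$, the claimed value. Since $m^{1/H}$ need not be an integer I will not cite McMullen directly but prove two matching inequalities. The relevant scale throughout is the family of \emph{approximate squares}: parabolic rectangles of width $\delta=n^{-k}$ and height $\delta^H=m^{-p}$ with $k=tp$, i.e.\ horizontal depth $k$ and vertical depth $p>k$.

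For the lower bound I put on $K(D)$ the self-affine Bernoulli measure $\mu$ that is uniform within each row and weights row $j$ proportionally to $r(j)^t$: the digits $(a_i,b_i)$ are i.i.d.\ with $p_{a,b}=r(b)^{t-1}/Z$ for $(a,b)\in D$, so the row marginal is $q_b=r(b)^t/Z$. An approximate square at vertical depth $p$ fixes the pair $(a_i,b_i)$ for $i\le tp$ but only the row $b_i$ for $tp<i\le p$, whence $\mu(\mathrm{rect}_p)=\prod_{i=1}^{tp}p_{a_i,b_i}\prod_{i=tp+1}^{p}q_{b_i}$. The strong law of large numbers applied to the i.i.d.\ digits gives, for $\mu$-a.e.\ point, $\tfrac1p\log\mu(\mathrm{rect}_p)\to -\big(t\,\mathbb{E}_\mu[-\log p_{a_1,b_1}]+(1-t)\,\mathbb{E}_\mu[-\log q_{b_1}]\big)$, and a short computation shows the two entropy terms combine (the $\sum_j r(j)^t\log r(j)$ contributions cancel) to leave exactly $-\log Z$. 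Since $\tfrac1p\log\delta=-(\log m)/H$, the local parabolic dimension equals $H\log_m Z=\beta^\ast$ almost everywhere. A mass distribution (Billingsley) argument in the metric $d_H$ then upgrades this to $\dpsi{K(D)}\ge\beta^\ast$: an arbitrary parabolic rectangle of width $\delta$ meets only boundedly many grid approximate squares of comparable width, and Egorov's theorem turns the a.e.\ local bound into a uniform estimate $\mu(\mathrm{rect}_\delta)\le C\delta^{\beta^\ast-\epsilon}$ on a set of positive $\mu$-measure, so that $\Psi^{\beta^\ast-\epsilon}_H(K(D))>0$ for every $\epsilon>0$.

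The upper bound $\dpsi{K(D)}\le\beta^\ast$ is the crux, and it is exactly McMullen's upper bound transported to $d_H$. The difficulty is that a uniform-depth cover is wasteful: covering every realizable vertical word $(b_1,\dots,b_p)$ by its $\prod_{i=1}^{tp}r(b_i)$ approximate squares produces the sum $|D|^{tp}R_0^{(1-t)p}m^{-p\beta/H}$ (with $R_0$ the number of nonempty rows), whose critical exponent is the strictly larger \emph{Minkowski} value $H\log_m\!\big(|D|^{t}R_0^{1-t}\big)\ge\beta^\ast$. To recover the correct weighting $r(j)^t$ one must cover at non-uniform scales, stopping each vertical branch at a depth adapted to the empirical row frequencies of its prefix; the gain is quantified by a H\"older/concavity inequality whose extremal distribution is precisely $q_j\propto r(j)^t$ and whose optimal value is $\log Z$. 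Organising this stopping rule into a genuine cover and summing the resulting geometric series over branches so that the total content tends to $0$ for every $\beta>\beta^\ast$ is the main obstacle — all the more so because the approximate-square shape rigidly ties the horizontal depth to $t$ times the vertical depth, so the naive single-scale estimate can detect only the Minkowski exponent.
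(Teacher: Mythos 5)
Your lower bound is sound and is essentially the paper's own argument: the same Bernoulli measure $p_{a,b}=r(b)^{\theta/H-1}/Z$ with $Z=\sum_j r(j)^{\theta/H}$ (the paper's \eqref{eq:probmeas}), the same strong-law computation in which the entropy terms collapse to $-\log Z$, and the parabolic Billingsley lemma (Lemma~\ref{lem:billingsley}) to convert the a.e.\ local estimate into $\dpsi{K(D)}\geq H\log_m Z$. But the upper bound --- which you yourself call ``the crux'' and ``the main obstacle'' --- is not proved: you describe a strategy (covers at non-uniform, branch-adapted depths organised by a stopping rule, with a H\"older/concavity inequality quantifying the gain) and stop exactly where the work would begin. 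As written, the proposal establishes only one of the two inequalities, so it has a genuine gap.

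The missing idea, as the paper implements it, is that no cover needs to be constructed at all: Billingsley's lemma is two-sided, and its upper direction only requires that for \emph{every} point $(x,y)\in K(D)$ the \emph{liminf} of $\log\mu(Q_k(x,y))/\log(m^{-k/H})$ be at most $\beta^\ast=H\log_m Z$ --- i.e.\ each point must see infinitely many good scales, but the good scales are allowed to depend on the point. This sidesteps precisely the obstruction you identified, that a single uniform scale can only detect the Minkowski exponent. Concretely, writing $S_k=\sum_{\ell\leq k}\log r(y_\ell)$, the optimal measure satisfies the identity
\[
\frac{H}{\theta k}\log\bigl(\mu(Q_k(x,y))\bigr) + \frac{H}{\theta}\log Z \;=\; \frac{S_k}{k} - \frac{S_{\lfloor \theta k/H\rfloor}}{\theta k/H},
\]
and along the geometric sequence $k_j=(H/\theta)^j$ the right-hand side telescopes when summed over $j$; since $S_\ell/\ell$ is bounded, the limsup of the right-hand side must be $\geq 0$ (otherwise the telescoping sum would diverge to $-\infty$). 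This holds deterministically at every point of $K(D)$, with no empirical-frequency or large-deviations analysis, and immediately gives $\liminf_k \log\mu(Q_k(x,y))/\log(m^{-k/H})\leq H\log_m Z$ everywhere, hence $\dpsi{K(D)}\leq H\log_m Z$ by Lemma~\ref{lem:billingsley}. This is McMullen's device in the form used in~\cite{Perescarpet}: the same measure that proves the lower bound also proves the upper bound. Your adaptive-cover route might well be completable, but since it is not carried out, the upper bound remains unproven in your proposal.
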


Before proving this lemma, we state the analogue of Billingsley's lemma for the parabolic Hausdorff dimension. See Billingsley~\cite{Billingsley} and Cajar~\cite{Cajar} for the proof. We first introduce some notation. Let~$b$ be an integer. We define the $b$-adic rectangles contained in~$[0,1]^2$ of generation~$k$ to be
\[
R_{k} = \left[\frac{(j-1)}{[b^{k/H}]}, \frac{j}{[b^{k/H}]}\right) \times \left[\frac{(i-1)}{b^k}, \frac{i}{b^k}\right),
\]
where $j$ ranges from $1$ to $[b^{k/H}]$ and $i$ ranges from $1$ to $b^k$,
and we write~$R_k(x)$ for the unique dyadic rectangle containing~$x$.

\begin{lemma}[Billingsley's lemma]\label{lem:billingsley}
Let~$A$ be a Borel subset of~$[0,1]^2$ and let~$\mu$ be a measure on~$[0,1]^2$ with~$\mu(A)>0$. If for all~$x \in A$ we have
\[
\alpha \leq \liminf_{n\to \infty} \frac{\log( \mu(R_n(x)))}{\log(b^{-n/H})} \leq \beta,
\]
then $\alpha\leq \dpsi{A} \leq \beta$.
\end{lemma}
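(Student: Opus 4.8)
The statement is the parabolic analogue of the classical Billingsley lemma, so I would prove the two inequalities separately: the upper bound $\dpsi{A}\le\beta$ by a direct covering/counting argument and the lower bound $\dpsi{A}\ge\alpha$ by the mass distribution principle, the only genuinely new ingredient being bookkeeping for the anisotropic (parabolic) scaling. We may assume $\mu$ is a finite measure. The key preliminary observation is that, writing $\delta_n=b^{-n/H}$, the rectangle $R_n(x)$ has horizontal side $1/[b^{n/H}]\asymp\delta_n$ and vertical side exactly $b^{-n}=\delta_n^H$; hence, up to a bounded number of pieces coming from the integer part $[b^{n/H}]$, each $R_n(x)$ is one of the admissible covering rectangles $[a,a+\delta_n]\times[c,c+\delta_n^H]$ appearing in the definition of $\Psi^\beta_H$, and $\log(b^{-n/H})=\log\delta_n$. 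Thus the hypothesis compares $\log\mu(R_n(x))$ with the logarithm of the \emph{width}, which is exactly the exponent that $\Psi^\beta_H$ sees.

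For the upper bound I would fix $\gamma>\beta'>\beta$. For every $x\in A$ the condition $\liminf\le\beta<\beta'$ produces infinitely many $n$ with $\mu(R_n(x))>\delta_n^{\beta'}$. Let $\mathcal G_n$ be the family of all generation-$n$ rectangles $R$ with $\mu(R)>\delta_n^{\beta'}$. Rectangles of a fixed generation tile $[0,1]^2$ and are pairwise disjoint, so $\#\mathcal G_n\,\delta_n^{\beta'}<\sum_{R\in\mathcal G_n}\mu(R)\le\mu([0,1]^2)$, giving $\#\mathcal G_n\le\mu([0,1]^2)\,\delta_n^{-\beta'}$. Since each $x\in A$ lies in some $R\in\mathcal G_n$ with $n$ arbitrarily large, the family $\{R\in\mathcal G_n:\ n\ge M\}$ covers $A$ for every $M$, whence
\[
\Psi^\gamma_H(A)\le\sum_{n\ge M}\#\mathcal G_n\,\delta_n^{\gamma}\le\mu([0,1]^2)\sum_{n\ge M}\delta_n^{\gamma-\beta'}=\mu([0,1]^2)\sum_{n\ge M}b^{-n(\gamma-\beta')/H}.
\]
As $\gamma>\beta'$ this geometric tail tends to $0$ with $M$, so $\Psi^\gamma_H(A)=0$ for all $\gamma>\beta$ and therefore $\dpsi{A}\le\beta$ (the bounded factor from the integer parts only affects the constant).

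For the lower bound I would run the mass distribution principle. Fix $\alpha'<\alpha$. For each $x\in A$ the bound $\liminf\ge\alpha>\alpha'$ gives an $N(x)$ with $\mu(R_n(x))\le\delta_n^{\alpha'}$ for all $n\ge N(x)$, so $A=\bigcup_N A_N$ with $A_N=\{x\in A:\mu(R_n(x))\le\delta_n^{\alpha'}\ \forall n\ge N\}$ increasing; since $\mu(A)>0$, some $A_N$ has $\mu(A_N)>0$. Now take any cover of $A_N$ by parabolic rectangles $E_j=[a_j,a_j+\delta_j]\times[c_j,c_j+\delta_j^H]$; we may assume all $\delta_j<\delta_N$, since otherwise $\sum_j\delta_j^{\alpha'}\ge\delta_N^{\alpha'}$ already. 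Choosing $n_j$ with $\delta_{n_j+1}\le\delta_j<\delta_{n_j}$, the rectangle $E_j$ has width and height comparable to those of generation $n_j$, so it meets at most a bounded number $C=C(b,H)$ of generation-$n_j$ rectangles, each of which, if it meets $A_N$, has mass at most $\delta_{n_j}^{\alpha'}\lesssim\delta_j^{\alpha'}$. Hence $\mu(E_j\cap A_N)\le C'\delta_j^{\alpha'}$, and summing over the cover, $\mu(A_N)\le\sum_j\mu(E_j\cap A_N)\le C'\sum_j\delta_j^{\alpha'}$. Taking the infimum over covers yields $\Psi^{\alpha'}_H(A)\ge\Psi^{\alpha'}_H(A_N)\ge\mu(A_N)/C'>0$, so $\dpsi{A}\ge\alpha'$ for every $\alpha'<\alpha$, i.e.\ $\dpsi{A}\ge\alpha$.

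The one place that needs real care — and the main obstacle — is the uniform bounded-intersection estimate in the lower bound: I must verify that a single parabolic covering rectangle of shape $\delta_j\times\delta_j^H$ can overlap only boundedly many $b$-adic parabolic rectangles $R_{n_j}$ of the matched generation, with a constant independent of the scale. This is where the anisotropy of the parabolic grid and the integer-part truncation $[b^{n/H}]$ (which prevents successive generations from being exactly nested, unlike in the classical one-dimensional statement) enter. Once the comparability $\delta_{n_j}\asymp\delta_j$ in the time direction and $\delta_{n_j}^H\asymp\delta_j^H$ in the space direction is used in \emph{both} coordinates the count is $O(1)$, but this two-rate matching is precisely the step that collapses if one does not track the differing scaling exponents of the two coordinate directions.
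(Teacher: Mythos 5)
Your proof is correct. Note that the paper does not actually prove this lemma --- it only cites Billingsley and Cajar --- so there is no in-paper argument to compare against; what you give is the standard proof (upper bound on $\dpsi{A}$ by covering $A$ with the ``heavy'' $b$-adic parabolic rectangles, lower bound by the mass distribution principle on the exhausting sets $A_N$), which is precisely the classical argument transported to the anisotropic grid. Your two technical points both check out: each generation-$n$ rectangle has vertical side exactly $b^{-n}=\delta_n^H$ and horizontal side $1/[b^{n/H}]\in[\delta_n,2\delta_n]$, so it can be covered by a single admissible rectangle of parameter $2\delta_n$ at the cost of a factor $2^\gamma$; and in the lower bound, an admissible rectangle $E_j$ with $\delta_{n_j+1}\le\delta_j<\delta_{n_j}$ has width smaller than the generation-$n_j$ horizontal spacing and height smaller than $\delta_{n_j}^H$, so it meets at most $2\times 2=4$ grid rectangles, each of mass at most $\delta_{n_j}^{\alpha'}\le b^{\alpha'/H}\delta_j^{\alpha'}$ when it meets $A_N$ (your observation that $\delta_j<\delta_N$ forces $n_j\ge N$ is exactly what makes this bound available). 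Two minor caveats. First, the upper-bound half genuinely requires $\mu([0,1]^2)<\infty$; ``we may assume $\mu$ is finite'' is not a reduction one can perform for free (restricting $\mu$ could destroy the hypothesis $\mu(R_n(x))>\delta_n^{\beta'}$ infinitely often), so finiteness should be read as a standing hypothesis --- harmless here, since the paper applies the lemma to probability measures. Second, the correct conclusion of your lower-bound argument is $\Psi^{\alpha'}_H(A_N)\ge\min\{\delta_N^{\alpha'},\,\mu(A_N)/C'\}$ rather than $\mu(A_N)/C'$ alone, since covers containing a rectangle with $\delta_j\ge\delta_N$ are handled by the separate case you mention; both quantities are positive, which is all that is needed.
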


We are now ready to give the proof of Lemma~\ref{lem:self-affine}. The proof follows the steps for the calculation of the Hausdorff dimension of a self-affine set as given in~\cite{McMullen} and~\cite{Perescarpet}.

\begin{proof}[{\textbf{Proof of Lemma~\ref{lem:self-affine}}}]

For $(x,y) \in [0,1]^2$ we define $Q_k(x,y)$ to be the closure of the set of points~$x',y'$ such that the first $\lfloor \theta k/H\rfloor$ digits of $x'$ and $x$ agree in the $n$-ary expansion and the first $k$ digits of $y'$ and $y$ agree in the $m$-ary expansion. Let $\pi=(p(d), d\in D)$ be a probability measure on~$D$.

Let $\mu$ be the image of the product measure $\pi^{\otimes\N}$ under the map
\[
R: \{(a_k,b_k)\}_{k\geq 1} \mapsto \sum_{k= 0}^{\infty}\left( a_kn^{-k}, b_km^{-k} \right),
\]
where $(a_k,b_k)\in D$ for all $k$.
We now consider the rectangle $n^{-k}\times m^{-k}$ defined by specifying the first $k$ digits of the base $n$ expansion of $x$ and the first $k$ digits of the base $m$ expansion of $y$. This has $\mu$ measure equal to $\prod_{i=1}^{k} p(x_i,y_i)$. Since $r(j)$ is the number of rectangles contained in row~$j$ of the pattern, it follows that the rectangle $Q_{k}(x,y)$ contains $\prod_{i=\lfloor \theta k/H\rfloor + 1}^{k} r(y_i)$ rectangles of size~$n^{-k}\times m^{-k}$.
We now assume that $p(d)$ only depends on the second coordinate. Hence we get
\begin{align}\label{eq:musquare}
\mu(Q_k(x,y)) = \prod_{\ell=1}^{k} p(x_\ell,y_\ell) \prod_{\ell=\lfloor \theta k/H \rfloor+1}^{k} r(y_\ell).
\end{align}
Taking logarithms of~\eqref{eq:musquare} we obtain
\begin{align}\label{eq:logmu}
\log(\mu(Q_k(x,y))) = \sum_{\ell=1}^{k} \log(p(x_\ell,y_\ell)) + \sum_{\ell=\lfloor \theta k/H\rfloor +1}^{k} \log(r(y_\ell)).
\end{align}
Since the digits $(x_\ell,y_\ell)_\ell$ are i.i.d.\ wrt to the product measure~$\pi^{\otimes\N}$, by the strong law of large numbers we get
\[
\lim_{k\to \infty} \frac{1}{k} \log(\mu(Q_k(x,y))  ) = \sum_{d\in D} p(d) \log(p(d)) + (1-\theta/H) \sum_{d\in D} p(d) \log(r(d))
\]
for $\mu$-almost every $x,y$.

Let~$A$ be the set of $(x,y)$ for which the convergence holds. Then $\mu(A^c)=0$. By the definition of the measure~$\mu$ it is clear that it is supported on the set~$K(D)$. Hence $\mu(K(D)^c\cup A^c) =0$ and for all~$x\in K(D)\cap A$ we have
\[
\lim_{k\to \infty} \frac{1}{k} \log(\mu(Q_k(x,y))  ) = \sum_{d\in D} p(d) \log(p(d)) + (1-\theta/H) \sum_{d\in D} p(d) \log(r(d)).
\]
Therefore using Lemma~\ref{lem:billingsley} we deduce
\[
\dpsi{K(D)\cap A} = \sum_{d\in D} p(d) \log(p(d)) + (1-\theta/H) \sum_{d\in D} p(d) \log(r(d)),
\]
and hence we obtain a lower bound for the parabolic dimension of $K(D)$
\[
\dpsi{K(D)} \geq \sum_{d\in D} p(d) \log(p(d)) + (1-\theta/H) \sum_{d\in D} p(d) \log(r(d)).
\]
Maximizing the right hand side of the above inequality over all probability measures $(p(d))$ gives that the maximizing measure is
\begin{align}\label{eq:probmeas}
p(d) = \frac{1}{Z}r(d)^{\theta/H-1} \quad \text{and} \quad Z = \sum_{d\in D} r(d)^{\theta/H-1} = \sum_{j=0}^{m-1}r(j)^{\theta/H}.
\end{align}
This choice of probability measure immediately gives
\[
\dpsi{K(D)} \geq H\log_m\sum_{j=0}^{m-1}r(j)^{\theta/H},
\]
and hence it remains to prove the upper bound.
From now we fix the choice of probability measure as in~\eqref{eq:probmeas}. We define
\[
S_k(x,y) = \sum_{\ell=1}^{k} r(y_\ell).
\]
Using~\eqref{eq:probmeas} we can rewrite~\eqref{eq:logmu} as follows
\begin{align*}
\log(\mu(Q_k(x,y)))& = \sum_{\ell=1}^{k} \log\left(\frac{1}{Z} r(y_\ell)^{\theta/H-1} \right) + \sum_{\ell=1}^{k} \log(r(y_\ell)) - \sum_{\ell=1}^{\lfloor \theta k/H\rfloor} \log(r(y_\ell) ) \\
&= -k \log(Z) + (\theta/H-1) S_k(x,y) + S_k(x,y) - S_{\lfloor \theta k/H\rfloor
}(x,y).
\end{align*}
Therefore
\begin{align}
\label{eq:integerpart}\frac{H}{\theta k} \log(\mu(Q_k(x,y)))+ \frac{H}{\theta} \log(Z) = \frac{S_k(x,y)}{k} - \frac{S_{\lfloor \theta k/H\rfloor}(x,y)}{\theta k/H}.
\end{align}
We can write the right hand side as follows
\begin{align*}
\frac{S_k(x,y)}{k} - \frac{S_{\lfloor \theta k/H\rfloor}(x,y)}{\theta k/H} = \frac{S_{\lfloor k\rfloor}(x,y)}{\lfloor k\rfloor} - \frac{S_{\lfloor \theta k/H\rfloor}(x,y)}{\lfloor \theta k/H\rfloor}\left(1- \frac{\{\theta k/H\}}{\theta k/H} \right),
\end{align*}
where for all $x$ we write $\{x\} = x-\lfloor x\rfloor$. Now we can sum the right hand side above over all $k=H/\theta,(H/\theta)^{2},\ldots$ and hence we get a telescoping series and a convergent one, since $(S_\ell/\ell)$ is bounded and $\theta/H <1$. In this way we get
\begin{align*}
\limsup_{k\to \infty} \left( \frac{S_k(x,y)}{k} - \frac{S_{\lfloor \theta k/H\rfloor}(x,y)}{\theta k/H} \right) \geq 0,
\end{align*}
since otherwise the sum of these differences would converge to~$-\infty$.
Hence, from~\eqref{eq:integerpart} we deduce
\begin{align*}
\liminf_{k\to \infty} \frac{\log(\mu(Q_k(x,y)))}{\log(m^{-k/H})} \leq H\log_m(Z)
\end{align*}
and applying now Lemma~\ref{lem:billingsley} we immediately conclude
\[
\dpsi{K(D)} \leq H\log_m(Z)
\]
and this finishes the proof of the theorem.
\end{proof}

\begin{proof}[\textbf{Proof of Corollary~\ref{cor:selfaffine}}]

The statement of the corollary follows immediately from Remark~\ref{rem:secondterm} and
Lemma~\ref{lem:self-affine}.
\end{proof}

We now proceed to prove Corollary~\ref{cor:strictineq}. To this end we first define a self-affine set $K$ and then show that there exists a function $f:[0,1]\to [0,1]$ which is H\"older continuous with parameter~$\log 2/\log 6$ and satisfies $\gr{f} = K$.

We start by defining the self-affine set that corresponds to the patterns $A$ and $B$ given by the matrices
\[
A =  \left( \begin{matrix} 0&0&0&0&0&1\\ 1&1&1&1&1&0 \end{matrix}\right) \quad \text{and} \quad B = \left( \begin{matrix} 1&0&0&0&0&0\\ 0&1&1&1&1&1 \end{matrix}\right).
\]
Let $\Q_0 = \{[0,1]^2\}$ be the set containing the rectangles of the $0$-th generation. To each rectangle in $\Q_0$ we assign label $A$. Suppose we have defined the collection $\Q_j$ and assigned labels to the rectangles in $\Q_j$. Then we subdivide each rectangle $R_j$ in $\Q_j$ into $12$ equal closed rectangles of width $6^{-(j+1)}$ and height $2^{-(j+1)}$. If the label assigned to $R_j$ is $A$ (resp.\ $B$), then in the subdivision we keep only those rectangles that correspond to the pattern $A$ (resp.\ $B$). If the label of $R_j$ is $A$, then to the rectangles that we kept we assign labels $A, B, A, B, A, A$ going
from left to right. If the label of $R_j$ is $B$, then to the rectangles that we kept we assign labels $B, B, A, B, A, B$ again going from left to right. The collection $\Q_{j+1}$ consists of those rectangles that we kept in the above procedure. Continuing indefinitely gives a compact set which we will denote $K$. The patterns~$A$ and~$B$ and the labels used in each iteration are depicted in Figure~\ref{fig:patterns} and the first four approximations to the set $K$ are shown in Figure~\ref{fig:K} in the Introduction.

\begin{claim}\label{cl:f}
There exists a function $f:[0,1]\to[0,1]$ such that $\gr{f} = K$. Moreover, $f$ is H\"older continuous with parameter $\theta = \log 2/\log 6$ and is not H\"older continuous with parameter~$\theta'$ for any~$\theta'>\theta$.
\end{claim}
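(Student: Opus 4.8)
The plan is to realise $f$ through the symbolic coding of $K$. Since $n=6$ and $m=2$, I would write $x\in[0,1]$ in base~$6$ as $x=\sum_{k\ge 1}a_k 6^{-k}$ with $a_k\in\{0,\dots,5\}$, and encode a point of $[0,1]^2$ by a base-$6$ sequence for its first coordinate together with a base-$2$ sequence $(b_k)$ for its second. The construction assigns to each rectangle a label in $\{A,B\}$, so it is governed by two maps read off directly from the patterns: a \emph{row} map $\rho(\ell,a)\in\{0,1\}$ and a \emph{label} map $\lambda(\ell,a)\in\{A,B\}$. Explicitly, under label $A$ one has $\rho(A,a)=0$ for $a\le 4$ and $\rho(A,5)=1$, with $\lambda(A,\cdot)=(A,B,A,B,A,A)$; under label $B$ one has $\rho(B,0)=1$ and $\rho(B,a)=0$ for $a\ge 1$, with $\lambda(B,\cdot)=(B,B,A,B,A,B)$. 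The crucial structural feature, valid for both patterns, is that \emph{each column contains exactly one rectangle}. Starting from $\ell_0=A$ and setting $\ell_k=\lambda(\ell_{k-1},a_k)$, $b_k=\rho(\ell_{k-1},a_k)$, I would define $f(x)=\sum_{k\ge1}b_k2^{-k}$. Because each column carries a unique selected rectangle, for a fixed $x$ the nested sequence of admissible rectangles is forced, so $K\cap(\{x\}\times\R)$ reduces to the single point $(x,f(x))$; this is what yields $\gr{f}=K$, once $f$ is shown to be well defined.

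Well-definedness is the first and main point, since a $6$-adic rational has two base-$6$ expansions, one ending in $0$'s and one ending in $5$'s, and $f$ must not depend on the choice. This reduces to a finite boundary check: reading $0$ forever from label $A$ gives all $b_k=0$ (tail value $0$) and from $B$ gives all $b_k=1$ (tail value $1$), while reading $5$ forever gives $1$ from $A$ and $0$ from $B$. Comparing, for each label $\ell$ and each $j\in\{1,\dots,5\}$, the tail value of ``$j$ then $0$ forever'' against that of ``$(j-1)$ then $5$ forever'', one verifies ten identities (for instance under $A$ at the $4$/$5$ boundary both sides equal $\tfrac12$), and these suffice by self-similarity to settle every $6$-adic rational. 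This confirms $f$ is well defined and, in passing, continuous at each $6$-adic point, where the two one-sided rectangles meet in the common value $f(c)$.

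For H\"older continuity I would prove the scale estimate: if $|x-x'|\le 6^{-k}$ then $|f(x)-f(x')|\le 2\cdot 2^{-k}$. Indeed $x,x'$ then lie in one closed generation-$k$ column, or in two adjacent ones sharing a boundary $c$. In the first case $(x,f(x))$ and $(x',f(x'))$ lie in the same rectangle of height $2^{-k}$. In the second, the consistency above places $(c,f(c))$ in \emph{both} rectangles, so $|f(x)-f(c)|\le 2^{-k}$ and $|f(c)-f(x')|\le 2^{-k}$. Since $2^{-k}=(6^{-k})^\theta$ with $\theta=\log2/\log6$, choosing $k$ with $6^{-(k+1)}\le|x-x'|\le 6^{-k}$ yields $|f(x)-f(x')|\lesssim|x-x'|^\theta$, i.e.\ $\theta$-H\"older continuity.

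Finally, for sharpness I would exploit exact self-affinity on the ``all-zeros'' cylinder. Since $\lambda(A,0)=A$, the restriction of $f$ to $[0,6^{-k}]$ is a rescaled copy of $f$ itself by factors $6^{-k}$ horizontally and $2^{-k}$ vertically, and a direct computation gives $f(0)=0$ and $f(6^{-k})=2^{-k}$ (the digit $1$ switches the label to $B$, after which reading $0$'s produces all $1$'s). Hence with $x_k=0$ and $x_k'=6^{-k}$ one has $|f(x_k)-f(x_k')|=2^{-k}$ while $|x_k-x_k'|=6^{-k}$, so for any $\theta'>\theta$, $|f(x_k)-f(x_k')|/|x_k-x_k'|^{\theta'}=6^{k(\theta'-\theta)}\to\infty$, ruling out H\"older continuity of any exponent exceeding $\theta$. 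The delicate step throughout is the boundary-matching of the second paragraph: it is exactly the pattern- and label-specific algebra that forces the two $6$-adic expansions to agree, and without it neither $\gr{f}=K$ nor continuity would hold.
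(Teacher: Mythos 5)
Your proof is correct and takes essentially the same route as the paper's: the same digit-by-digit coding of $f$ via the labels $A,B$, the same consistency check at $6$-adic rationals (the two base-$6$ expansions yielding equal values), the same shared-boundary/triangle-inequality argument for $\theta$-H\"older continuity, and an explicit digit-pair witness for the failure of any exponent $\theta'>\theta$. One remark: your row rule for pattern $B$ (selected rectangle on top in column $0$) is the one consistent with the matrices $A,B$ and with well-definedness --- indeed only that rule passes your ten boundary identities --- whereas the sentence in the paper's proof assigning $y_j=1$ when $x_j=1$ under pattern $B$ is a typo, so your version is the correct reading.
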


 \begin{proof}[\textbf{Proof}]

 For every $x\in [0,1]$ let $x = \sum_{i=1}^{\infty} x_i 6^{-i}$ with $x_i\in \{0,1,2,3,4,5\}$ be its expansion in base~$6$. Note that if $x = k6^{-i}$ for some $k\in \{0,1,\ldots, 6^i\}$, then $x$ has two different expansions in base~$6$; one with an infinite number of $0$'s and one with an infinite number of $5$'s. To define the function $f$ we consider the expansion with the infinite number of $0$'s. We now define a sequence $(y_i)$ corresponding to the sequence $(x_i)$, where $y_i\in \{0,1\}$. For each rectangle $R\in \Q_j$ we consider the interval of the~$j$-th generation which is the projection of $R$ on $[0,1]$. This way we obtain a partition of $[0,1]$ into disjoint subintervals of length~$6^{-j}$ in generation $j$.

To determine $y_j$ we find the interval of the $j$-th generation where $x$ belongs to. If the pattern used in the rectangle of the $j$-th generation that corresponds to this interval is $A$, then if $x_j\neq 5$, we set $y_j = 0$, otherwise we set $y_j=1$.  If the pattern used is $B$, then if $x_j\neq 1$, we set $y_j = 0$, otherwise we set $y_j=1$. We finally define
\[
f(x) = \sum_{i=1}^{\infty} y_i 2^{-i}.
\]
It is now clear that $\gr{f} = K$. It remains to show the H\"older property.

We first argue that the definition of $f$ remains unchanged if we do not require for the representation of $x$ to have an infinite number of $0$'s. Suppose that $x$ lies on a dividing line of the $i$-th generation. Then the first $i$ digits of $x$ are independent of the representation. Thus the first $i$ digits of $f(x)$ are also independent. Then there are several cases. We illustrate four of them in Figure~\ref{fig:zoom}. In Figure~\ref{fig:A} the labels of the two rectangles above $x$ from left to right are $A, B$. This means that $y_{i+1}=1$ and this is independent of the representation. In the case of Figure~\ref{fig:AA} the two rectangles from left to right are assigned $A,A$. In the representation from the left $y_{i+1} = 0$ and from the right $y_{i+1}' =0$. In the next generations $y_{i+k}= 1$ and $y_{i+k}'=0$ for all $k\geq 2$. This now implies that $f(x)$ is independent of the representation in this case. The other cases follow similarly.

\begin{figure}
\label{fig:zoom}
\begin{center}
\subfigure[Pattern $A$]{\label{fig:A}
\includegraphics[scale=0.5]{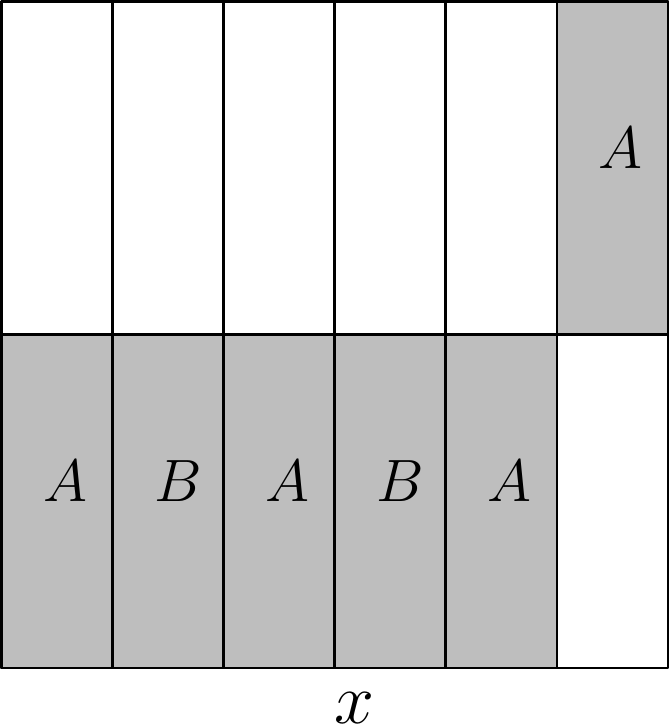}
}
\subfigure[Pattern $A$]{\label{fig:AA}
\includegraphics[scale=0.5]{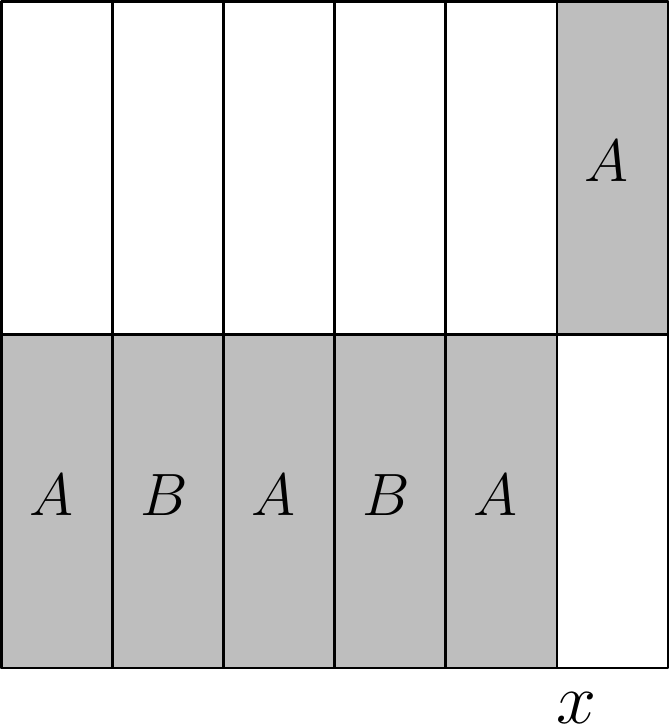}
}
\subfigure[Pattern $B$]{\label{fig:B}
\includegraphics[scale=0.5]{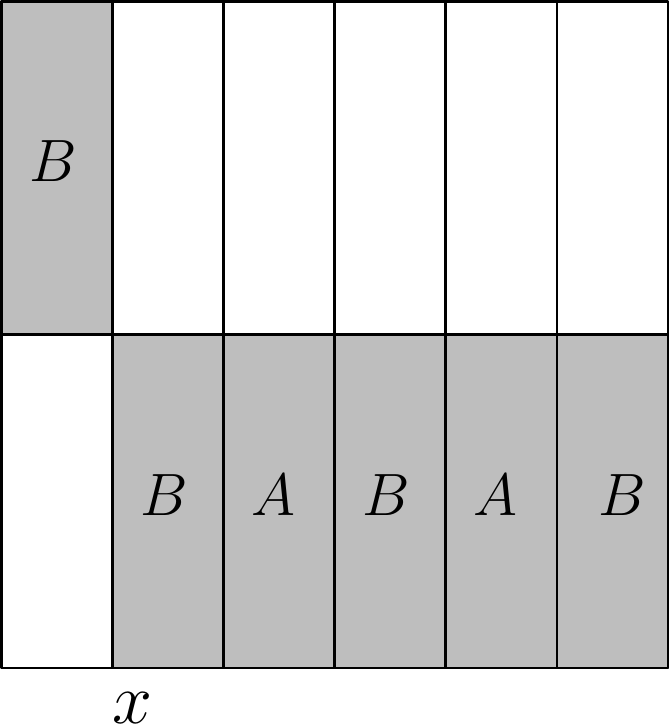}
}
\subfigure[Pattern $B$]{\label{fig:BB}
\includegraphics[scale=0.5]{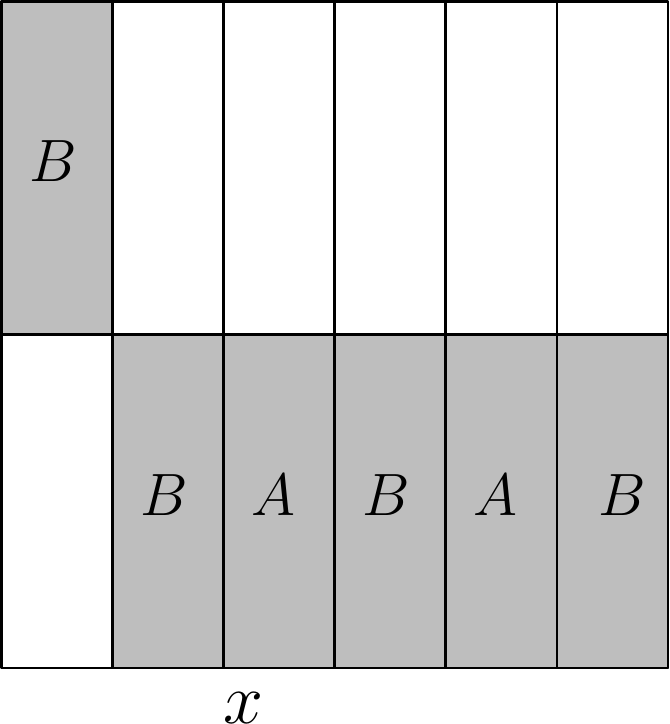}
}
\caption{Patterns $A, B$ and location of $x$ on the dividing line}
\end{center}
\end{figure}

It now remains to show that $f$ is H\"older continuous. Let $x$ and $x'$ satisfy
\[
6^{-k} < |x-x'| \leq 6^{-k+1} \quad \text{and} \quad x_i = x_i', \quad  \forall i\leq k.
\]
Then by the construction of $f$ it follows that $y_i = y_i'$ for all $i\leq k$, and hence
\[
|f(x) - f(x')| =\left|\sum_{i=1}^{\infty} (y_i - y_i') 2^{-i} \right| \leq \sum_{i=k+1}^{\infty} 2^{-i} = 2^{-k} = 6^{-\theta k} \leq |x-x'|^{\theta}.
\]
If $x,x'$ satisfy $6^{-k}<|x-x'|\leq 6^{-k+1}$ but disagree in the first $k$ digits, then let $x_0 = \ell 6^{-k}$ be the unique point of the $k$-th subdivision that agrees with $x$ and $x'$ in the first $k$ digits if we consider its two representations in base $6$. Then by the above argument it follows that
\[
|f(x)-f(x_0)| \leq |x-x_0|^\theta \quad \text{and} \quad |f(x') - f(x_0)| \leq |x'-x_0|^\theta.
\]
Therefore by the triangle inequality we immediately get that
\[
|f(x) - f(x')| \leq 2|x-x'|^\theta
\]
and this proves that $f$ is H\"older continuous with parameter $\theta$.

We note that $f$ is not H\"older continuous for any $\theta'>\theta$. Indeed, let $(x_n^{(k)})_k, (u_n^{(k)})_k$ be two sequences indexed by~$k$ such that $x_n^{(k)} = u_n^{(k)}$ for all $n\neq k$ and $x_k^{(k)} = 0$ and $u_k^{(k)}=5$ and let the rectangle of generation $k-1$ where $x=x^{(k)}$ and $u=u^{(k)}$ belong to have label $A$. Then it is easy to see that $y_k = 0$ and $v_k = 1$, where $f(x) =\sum_i y_i2^{-i}$ and $f(u)=\sum_i v_i2^{-i}$. Therefore
\[
|f(x) - f(u)| = |y_{k} - v_k| 2^{-k} =6^{-\theta k},
\]
and hence $f$ cannot be H\"older continuous for any $\theta'>\theta$.
 \end{proof}

\begin{proof}[\textbf{Proof of Corollary~\ref{cor:strictineq}}]

We first explain how we can adapt the proof of Lemma~\ref{lem:self-affine} in order to get the parabolic dimension of $K$, since the patterns used are not the same in each iteration as was the case there. We only outline where the two proofs differ.

Let $D_1 = \{(0,0), (0,1), (0,2), (0,3), (0,4), (1,5)\}$  and  $D_2 = \{(1,0), (0,1), (0,2), (0,3), (0,4), (0,5)\}$ correspond to patterns $A$ and~$B$ respectively.
We define two probability distributions on $D_1$ and on $D_2$. Let $p>0$ and $q>0$ satisfy $5p+q=1$. Then we let $p_1(x,0) =  p$ for all $x\neq 5$ and $p_1(5,1) = q$.  This is a distribution on $D_1$. We also let $p_2(0,1) = q$ and $p_2(x,0) = p$ for $x\neq 0$. This is a distribution on~$D_2$. We notice that both distributions only depend on the second coordinate and give the same values to this coordinate. We now generate $(\xi_i^1,\xi_i^2)_{i\geq 1}$ an i.i.d.\ sequence from $p_1$ and independently $(\zeta_i^1,\zeta_i^2)_{i\geq 1}$ an i.i.d.\ sequence from $p_2$. We sample $(x,y) \in K$ by sampling the digits. Namely, $(x_1,y_1)= (\xi_1^1,\xi_1^2)$ and then iteratively depending on the history of the process we set either $(x_i,y_i) = (\xi_{r(i)}^1,\xi_{r(i)}^2)$
or $(x_i,y_i) = (\zeta_{i-r(i)}^1,\zeta_{i-r(i)}^2)$,  where $r(i)$ is the number of times that we have used the distribution $p_1$.  Then if $\mu$ is the measure induced by these distributions we get for $(x,y)\in K$ and $Q_{k}(x,y)$ as defined in Lemma~\ref{lem:self-affine}
\begin{align*}
\mu(Q_k(x,y)) = \prod_{i=1}^{k} w(x_i,y_i) \prod_{j=\lfloor 2\theta k\rfloor +1}^{k} r(y_j),
\end{align*}
where $w(x_i,y_i)$ is either equal to $p_1(x_i,y_i)$ or to $p_2(x_i,y_i)$ and $\theta = \log 2/\log 6$. By the construction above it easily follows that $w(x_i,y_i)$ is an i.i.d.\ sequence that takes the value $p$ with probability $5p$ and the value $q$ with probability $q$. By the strong law of large numbers we then deduce that for $\mu$-almost every $(x,y)$
\begin{align*}
\lim_{k\to \infty} \frac{1}{k} \log \left(\mu(Q_k(x,y)) \right) = 5p\log p + q\log q + (1-2\theta) 5p\log 5.
\end{align*}
Now the rest of the proof follows in exactly the same way as the proof of Lemma~\ref{lem:self-affine} to finally give
\begin{align}\label{eq:dimf}
\dpsi{\gr{f}} = \frac{1}{2} \log_2(5^{2\theta} + 1),
\end{align}
where we used $H=1/2$ for the Brownian motion.
Let $f:[0,1]\to [0,1]$ be the function of Claim~\ref{cl:f} which is H\"older continuous with exponent~$\theta$ and satisfies $\gr{f} = K$. Then from~\eqref{eq:dimf} and Corollary~\ref{cor:selfaffine} we immediately get
\[
\dim(\gr{B+f}) = \frac{\log_2\left(5^{2\theta} +1 \right) +1}{2}.
\]
Since we have
\[
\dim (\gr{f}) = \log_2\left(5^\theta+1\right),
\]
it follows that
\[
\dim(\gr{B+f}) > \max\{\dim (\gr{f}), 3/2\}
\]
and this concludes the proof.
\end{proof}

\section{Comparing dimensions of $\gr{B+f}$ when $\gr{f}$ is a self affine set}\label{sec:mink}

\begin{theorem}
Let $B$ be a standard Brownian motion in $\R$ and $n>m^2$. Let $D\subseteq \{0,\ldots,n-1\}\times \{0,\ldots, m-1\}$ be a pattern such that every row always contains a chosen rectangle (i.e. $r_j \ge 1$ for all $j \le m-1$) and every column contains exactly one chosen rectangle. Then there exists a function~$f$ with $\gr{f} = K(D)$ and
we have almost surely
\begin{align}\label{eq:minky}
\dim_M (\gr{B+f}) = \dim_M (\gr{f}) = 1 + \log_n \frac{n}{m}.
\end{align}
Moreover, if the $r_j$ are not all equal, then almost surely
\begin{align}\label{eq:strict}
\max\{\dim(\gr{B}), \dim(\gr{f})\}  < \dim(\gr{B+f}) < \dim_M(\gr{B+f}) .
\end{align}
\end{theorem}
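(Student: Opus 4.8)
The plan is to establish the three displays in \eqref{eq:minky} and \eqref{eq:strict} by computing each dimension explicitly from the combinatorial data of the pattern~$D$ and comparing. First I would handle the Minkowski equality \eqref{eq:minky}. By McMullen's formula \eqref{eq:self-ineq}, since $r(j)\ge 1$ for every row we have $\dim_M(\gr{f}) = 1 + \log_n(|D|/m)$; the hypothesis that every column contains exactly one chosen rectangle forces $|D| = n$, so $\dim_M(\gr{f}) = 1 + \log_n(n/m)$. Because $n>m^2$ gives $\log_n m < 1/2 = H$, the function $f$ with $\gr{f}=K(D)$ is H\"older of the relevant exponent and the Minkowski-dimension stability result \eqref{eq:mineq} of \cite{CPS12} applies, yielding $\dim_M(\gr{B+f}) = \max\{\dim_M(\gr{f}), \dim_M(\gr{B})\}$. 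Since $\dim_M(\gr{B})=3/2$ and $1+\log_n(n/m) = 2-\log_n m > 3/2$, the maximum is $\dim_M(\gr{f})$, giving \eqref{eq:minky}. I should also check that a function $f$ with $\gr{f}=K(D)$ and the required H\"older regularity genuinely exists; the column condition (exactly one rectangle per column) is precisely what makes $K(D)$ the graph of a well-defined function, so this is where that hypothesis earns its keep.

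Next I would compute the Hausdorff dimension $\dim(\gr{B+f})$ via Corollary~\ref{cor:selfaffine}. With $H=1/2$ and $\log_n m < 1/2$, the corollary gives
\[
\dim(\gr{B+f}) = \tfrac12 + \tfrac12 \log_m\!\left( \sum_{j=0}^{m-1} r(j)^{2\log_n m} \right).
\]
The two inequalities in \eqref{eq:strict} then become purely analytic comparisons. For the right-hand inequality $\dim(\gr{B+f}) < \dim_M(\gr{B+f})$, I would compare the above against $2 - \log_n m$; setting $s = 2\log_n m \in (0,1)$ and using $\sum_j r(j) = |D| = n$, the claim reduces to the strict Jensen-type inequality $\sum_j r(j)^{s} < m^{1-s}\,(\sum_j r(j))^{s} = m^{1-s} n^{s}$ for the concave power $x\mapsto x^s$, which is strict exactly when the $r(j)$ are not all equal. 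This is where the hypothesis on unequal $r_j$ is used. For the left-hand inequality, $\dim(\gr{B})=3/2$, and I would again invoke strict concavity together with $n>m^2$ to show $\tfrac12\log_m(\sum_j r(j)^{s}) > 1$, i.e.\ $\sum_j r(j)^s > m^2$; comparison with $\max\{3/2,\dim(\gr{f})\}$ uses the general lower bound $\dim(\gr{B+f})\ge\max\{\dim(\gr{B}),\dim(\gr{f})\}$ from \cite{PS10}, and I would show both candidate maxima are strictly below the computed value.

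The main obstacle I anticipate is making the strict inequalities genuinely strict rather than merely non-strict, and isolating exactly which hypothesis each one needs. Both inequalities in \eqref{eq:strict} ultimately rest on the strict concavity of $x \mapsto x^{s}$ with $0<s<1$, so I must verify that $s = 2\log_n m$ lands strictly inside $(0,1)$ — the lower bound $s>0$ is automatic and the upper bound $s<1$ is exactly $n>m^2$. The subtle point is that the left inequality compares against $\max\{\dim(\gr{B}),\dim(\gr{f})\}$, and I must rule out the degenerate possibility that the two terms in the minimum of Theorem~\ref{thm:dim-graph-image} coincide with the computed dimension; by Remark~\ref{rem:secondterm}, in dimension $d=1$ with $A=[0,1]$ the relevant term is always $\alpha/H$, which removes this ambiguity and lets me work with a single clean formula throughout. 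I expect the routine part to be the algebraic manipulation of logarithms to base $n$, base $m$, and base $2$, which I would carry out but not belabor here.
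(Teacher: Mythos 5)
Your treatment of the Hausdorff-dimension claims is essentially the paper's own argument and is sound: Corollary~\ref{cor:selfaffine} (which requires only that $f$ be Borel, so it applies here) gives $\dim(\gr{B+f})=\frac12\bigl(1+\log_m\sum_j r_j^{2\theta}\bigr)$ with $\theta=\log_n m$; the right-hand inequality of \eqref{eq:strict} is strict Jensen for the concave power $2\theta\in(0,1)$ (this is exactly where $n>m^2$ and the non-equality of the $r_j$ enter), and the comparison with $\dim(\gr{B})=3/2$ follows from strict subadditivity, $\sum_j r_j^{2\theta}>\bigl(\sum_j r_j\bigr)^{2\theta}=n^{2\theta}=m^2$. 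Two caveats: you never actually state the comparison with $\dim(\gr{f})=\log_m\sum_j r_j^{\theta}$, which needs the strict Cauchy--Schwarz inequality $\bigl(\sum_j r_j^{\theta}\bigr)^2<m\sum_j r_j^{2\theta}$ (strict because the $r_j$ are not all equal) --- this is the paper's step \eqref{eq:strict1}; and your reading of Remark~\ref{rem:secondterm} is backwards: in $d=1$ the minimum in Theorem~\ref{thm:dim-graph-image} is the \emph{second} term $\alpha+d(1-H)$, not $\alpha/H$ (your displayed formula is nevertheless the correct one).

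The genuine gap is in your proof of \eqref{eq:minky}. You assert that $f$ is H\"older continuous and invoke \eqref{eq:mineq}, i.e.\ \cite[Theorem~1.8]{CPS12}, which is stated only for \emph{continuous} functions. But the hypothesis that every column of $D$ contains exactly one chosen rectangle only makes $K(D)$ the graph of a well-defined Borel function; it does not give continuity, and generically continuity fails. Concretely, take $n=5$, $m=2$ and chosen rows $(0,1,0,1,0)$ in columns $0,\dots,4$ (so $r_0=3$, $r_1=2$, and all hypotheses hold): as $x\uparrow 1/5$ the base-$5$ digits are $0,4,4,\dots$, forcing $f(x)\to 0$, while $x=1/5$ itself has digits $1,0,0,\dots$, giving $f(1/5)=1/2$; so $f$ jumps at $1/5$, and indeed no function whose graph equals (or is dense in) $K(D)$ can be continuous there, since $K(D)$ contains both $(1/5,0)$ and $(1/5,1/2)$. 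The continuity achieved in Corollary~\ref{cor:strictineq} was the product of the special alternating labels $A,B$ and the verification in Claim~\ref{cl:f}; nothing of that sort is available under the present hypotheses. (The same objection applies to your appeal to the lower bound of \cite{PS10}, which also assumes continuity, though that appeal is redundant in your plan.) The paper circumvents this as follows: it modifies $f$ to be c\`adl\`ag, which affects neither $\dim_M(\gr{f})$ nor $\dim_M(\gr{B+f})$, applies \cite[Theorem~1.7]{CPS12} --- valid for c\`adl\`ag functions --- to obtain only the lower bound $\dim_M(\gr{B+f})\ge\dim_M(\gr{f})$, and then proves the matching upper bound by a direct covering argument in the style of McMullen: by the H\"older continuity of $B$ and $\theta<1/2$, the image under $t\mapsto(t,B_t+f(t))$ of each generation-$j$ rectangle of the construction lies in a rectangle of width $n^{-j}$ and height $O(m^{-j})$, so $\gr{B+f}$ is covered by $|D|^j\cdot O\bigl((n/m)^j\bigr)$ squares of side $n^{-j}$, whence $\dim_M(\gr{B+f})\le 1+\log_n(|D|/m)$. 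Your proposal needs an argument of this type in place of the one-line appeal to \eqref{eq:mineq}.
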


\begin{proof}[\textbf{Proof}]

Note that the function $f$ can be made c\`adl\`ag without affecting $\dim_M\gr{B+f}$ and $\dim_M\gr{f}$. Then we can apply~\cite[Theorem~1.7]{CPS12} to get that almost surely
\begin{align}\label{eq:lowerbdgrf}
\dim_M(\gr{B+f}) \geq \dim_M(\gr{f}).
\end{align}
It only remains to prove the upper bound. We follow McMullen's proof~\cite{McMullen} for the calculation of the Minkowski dimension of~$\gr{f}$. First notice that $\theta=\log_n m <1/2$.

Consider a rectangle of the $j$-th generation of the construction of $\gr{f}$ with size $n^{-j}\times m^{-j}$. Then it is of the form $R=[pn^{-j},(p+1)n^{-j}]\times [qm^{-j},(q+1)m^{-j}]$. By the H\"older property of Brownian motion it follows that for $\zeta>0$ there exists a constant $C$ such that almost surely for all $s,t\in [0,1]$ we have
\begin{align}\label{eq:holderpr}
|B_t-B_s| \leq C|t-s|^{1/2-\zeta}.
\end{align}
When $\gr{f}$ is perturbed by Brownian motion, then the above rectangle becomes
\[
R'=[pn^{-j},(p+1)n^{-j}]\times [qm^{-j}+B_{pn^{-j}}-Cn^{-j(1/2-\zeta)},(q+1)m^{-j} + B_{pn^{-j}} +Cn^{-j(1/2-\zeta)}].
\]
This means that if $(t,f(t)) \in R$, then by~\eqref{eq:holderpr} we have $(t,B_t+f(t)) \in R'$. If $\theta = \log_n m$, then the rectangle $R'$ requires $m^{j-[\theta j]}$ squares of side $n^{-j}$ to cover it, since $\theta<1/2$. Therefore the number of squares of side $n^{-j}$ needed to cover $\gr{B+f}$ is at most $|D|^j m^{j-[\theta j]}$. Taking logarithms and then the limit as $j\to \infty$ we obtain that almost surely
\begin{align*}
\dim_M(\gr{B+f}) \leq \lim_{j\to\infty} \frac{\log |D|^j m^{j-[\theta j]}}{\log n^j} = 1 + \log_n\frac{|D|}{m} = \dim_M(\gr{f})
\end{align*}
and this together with~\eqref{eq:lowerbdgrf} concludes the proof of (\ref{eq:minky}).

It remains to prove~\eqref{eq:strict}. By Cauchy-Schwartz we have
$$
\sum_{j=0}^{m-1} r_j^\theta \le \Bigl(m \sum_{j=0}^{m-1} r_j^{2\theta} \Bigr)^{1/2} \,.
$$
Therefore almost surely we get
\begin{align}\label{eq:strict1}
\dim(\gr{f}) =\log_m(\sum_{j=0}^{m-1} r_j^\theta) \le \frac{1}{2}\Bigl(1+ \log_m(\sum_{j=0}^{m-1} r_j^{2\theta}) \Bigr)= \dim(\gr{B+f}).
\end{align}
Since $2\theta<1$, we have $\sum_{j=0}^{m-1} r_j^{2\theta} > \sum_{j=0}^{m-1} r_j n^{2\theta-1} =n^{2\theta}$. Thus
$$
\dim(\gr{B+f}) > \frac{ 1+\log_m (n^{2\theta})}{2}=\frac{3}{2}=\dim(\gr{B}) \quad a.s.
$$
and together with (\ref{eq:strict1}), this proves the first inequality in (\ref{eq:strict}).

If the $r_j$ are not all equal, then by Jensen's inequality we get
 $$
 \frac{1}{m} \sum_{j=0}^{m-1} r_j^{2\theta} <\Bigl(\frac{1}{m}\sum_{j=0}^{m-1} r_j \Bigr)^{2\theta}  =(n/m)^{2\theta} \,,
$$
whence
$$
\dim(\gr{B+f}) < \frac{ 1+\log_m \Bigl(m (n/m)^{2\theta}\Bigr)}{2}=2-\theta \,,
$$
establishing the second inequality in (\ref{eq:strict}).
\end{proof}

\bibliographystyle{plain}
\bibliography{biblio}

\end{document}